\newtheorem{theorem}{Theorem}
\newtheorem{lemma}[theorem]{Lemma}
\newtheorem{proposition}[theorem]{Proposition}
\newtheorem*{theorem*}{Theorem}
\newtheorem*{lemma*}{Lemma}
\theoremstyle{definition}
\newtheorem{definition}[theorem]{Definition}
\theoremstyle{remark}
\newtheorem{remark}[theorem]{Remark}
\numberwithin{theorem}{section}
\numberwithin{equation}{section}
\def\XXint#1#2#3{{\setbox0=\hbox{$#1{#2#3}{\int}$}
		\vcenter{\hbox{$#2#3$}}\kern-.5\wd0}}
\newcommand{\bbR}{\mathbb{R}}
\newcommand{\bbN}{\mathbb{N}}
\newcommand{\Lip}{{\rm Lip}}
\newcommand{\diam}{{\rm diam}}
\newcommand{\R}{{\mathbb R}}
\newcommand{\Z}{{\mathbb Z}}
\newcommand{\N}{{\mathbb N}}
\newcommand{\calH}{\mathcal{H}}
\begin{document}
	\title[UDS in Laakso Space]{Universal Differentiability Sets in Laakso Space}

	\author[Sylvester Eriksson-Bique]{Sylvester Eriksson-Bique}
	\address[Sylvester Eriksson-Bique]{Department of Mathematics and Statistics, University of Jyvaskyla, Seminaarinkatu 15, PO Box 35, FI-40014 University of Jyvaskyla, Finland}
	\email[Sylvester Eriksson-Bique]{sylvester.d.eriksson-bique@jyu.fi}
	
	\author[Andrea Pinamonti]{Andrea Pinamonti}
	\address[Andrea Pinamonti]{Department of Mathematics, University of Trento, Via Sommarive 14, 38123 Povo (Trento), Italy}
	\email[Andrea Pinamonti]{Andrea.Pinamonti@unitn.it}
	
	\author[Gareth Speight]{Gareth Speight}
	\address[Gareth Speight]{Department of Mathematical Sciences, University of Cincinnati, 2815 Commons Way, Cincinnati, OH 45221, United States}
	\email[Gareth Speight]{Gareth.Speight@uc.edu}
	
	%\date{\today}
	
	\begin{abstract}
We show that there exists a family of mutually singular doubling measures on Laakso space with respect to which real-valued Lipschitz functions are almost everywhere differentiable. This implies that there exists a measure zero universal differentiability set in Laakso space. Additionally, we show that each of the measures constructed supports a Poincar\'e inequality.
	\end{abstract}
	
	\maketitle

\section{Introduction}

Rademacher's theorem states that Lipschitz functions between Euclidean spaces are differentiable Lebesgue almost everywhere. This result has many extensions and applications. One direction of research extends Rademacher's theorem to more general spaces, such as Banach spaces \cite{BL00}, Carnot groups \cite{Pan89}, and metric measure spaces \cite{Che99}. Another direction of research asks to what extent Rademacher's theorem is optimal \cite{ACP10, CJ15, PS15, Pre90, Zah46}. The present paper contributes to this study in the setting of Laakso space, a metric measure space in which a version of Rademacher's theorem holds. Laakso space (Definition \ref{def_laakso}) is of the form $F:=(I\times K)/\sim$, where $K$ is the middle third Cantor set and $\sim$ is a suitable equivalence relation. It was first constructed by Laakso \cite{Laa00} who gave a family of spaces, depending on parameters, to show there exists an Ahlfors $Q$-regular metric measure space of any dimension $Q>1$ which supports a Poincar\'e inequality. See also \cite{Cap} for a nice overview of the main properties of Laakso space.

It has been known for some time that Rademacher's theorem does not admit a converse for Lipschitz maps $f\colon \mathbb{R}^{n}\to \mathbb{R}$ for $n\geq 2$, see e.g. \cite{DM11, DM14}. To be more precise, there exists a Lebesgue measure zero set $N\subset \mathbb{R}^{n}$ containing a point of differentiability for every real-valued Lipschitz map on $\mathbb{R}^{n}$. Such a set $N$ is often called a universal differentiability set (UDS). In addition to the Euclidean case, measure zero UDS are known to exist in some classes of Carnot groups, which include all step two Carnot groups and examples of arbitrarily high step \cite{PS16, LPS17, PS18}. The key technique used to prove these results is a refinement of the fact that existence of a maximal directional derivative implies differentiability \cite{Fit84}. Surprisingly, the second and third authors showed in \cite{CPS22} that this fact does not have a simple analogue in Laakso space. This left open the question of whether measure zero UDS exist in Laakso space. The present paper answers this question by showing that they do, by means of a different method.

Another natural way to study optimality of Rademacher's theorem is to ask whether Lipschitz functions can be differentiable almost everywhere with respect to another measure on the same space. It is known that in Euclidean spaces such a measure must be absolutely continuous with respect to Lebesgue measure \cite{DR16} and in Carnot groups it must be absolutely continuous with respect to the natural Haar measure \cite{DMMPR}. In this paper we show that these results do not extend to the Laakso space with its natural Hausdorff measure. In particular, there exists a family of mutually singular measures with respect to which real-valued Lipschitz functions are differentiable almost everywhere. It should be stressed that the idea behind the construction of the measures comes from work of Schioppa \cite{Sch15}, who constructed a family of mutually singular measures on a  metric measure space which are all doubling and support a Poincar\'e inequality. The space Schioppa used is different from that of Laakso.

We now describe our main results. Our first main result gives a family of mutually singular doubling measures for which Rademacher's theorem holds.

\begin{theorem}\label{rademachermuw}
There exist doubling measures $\mu_{w}$ on $F$ for each $w\in (0,1)$ so that 
\begin{enumerate}
    \item $\mu_{w}$ and $\mu_{w'}$ are mutually singular whenever $w\neq w'$, and
    \item for each $w\in (0,1)$, every Lipschitz map $f\colon F\to \mathbb{R}$ is differentiable almost everywhere with respect to $\mu_{w}$.
\end{enumerate}
\end{theorem}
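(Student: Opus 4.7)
The plan is to adapt the Schioppa construction of singular doubling measures to the Cantor-fibre direction of Laakso space. Since $F = (I \times K)/\!\sim$, a natural family of candidate measures is obtained by pushing forward $\mathcal{L}^1 \otimes \nu_w$ along the quotient map, where $\nu_w$ is the Bernoulli-type measure on the middle-thirds Cantor set $K$ that assigns mass $w$ to one child and $1-w$ to the other at each branching scale. Because the wormhole relation $\sim$ identifies only finitely many points at each scale (a $\mu_w$-null set), this defines a locally finite Borel measure $\mu_w$ on $F$ for every $w \in (0,1)$.

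The doubling property and mutual singularity are relatively soft. The Bernoulli measure $\nu_w$ on $K$ is doubling with constant depending only on $\min(w,1-w)$, and combining it with Lebesgue measure on $I$ produces a doubling measure on $I \times K$; the quotient preserves this up to a uniform constant since wormholes identify only a bounded number of fibres near each ball. Mutual singularity follows from the strong law of large numbers applied to the Bernoulli factor: for $\nu_w$-a.e. Cantor point, the asymptotic digit frequency equals $w$, so $\nu_w \perp \nu_{w'}$ when $w \neq w'$, and this lifts to $\mu_w$ on $F$ because the Cantor-fibre coordinate is a well-defined measurable function away from the countable wormhole set.

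The main content is proving almost-everywhere differentiability of Lipschitz maps $f\colon F \to \mathbb{R}$ with respect to $\mu_w$. The strategy is to establish that $(F,\mu_w)$ supports a $(1,1)$-Poincar\'e inequality and then appeal to Cheeger's theorem \cite{Che99}, using the natural height function $h\colon F \to [0,1]$ (inherited from the projection $I \times K \to I$) as a one-dimensional chart. For the Poincar\'e inequality I would exhibit an Alberti representation of $\mu_w$ consisting of the horizontal curves $t \mapsto [(t,k)]$ for $k \in K$: Fubini combined with the quotient construction disintegrates $\mu_w$ along these curves with $\mathcal{L}^1$-weight, and such a pencil of geodesics suffices, by Keith's criterion, to give the Poincar\'e inequality. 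Since Cheeger's chart in this setting is one-dimensional and coincides with the direction along which these curves move, differentiability with respect to $h$ upgrades to genuine differentiability of $f$ at $\mu_w$-almost every point.

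The principal obstacle is the uniform verification of the Poincar\'e inequality across the branching scales, and in particular at wormhole points where horizontal curves merge and the $\mu_w$-mass carried by each branch is asymmetric (weighted by $w$ versus $1-w$). One needs to arrange the curve family so that any two nearby points can be joined by a rectifiable path whose total $\mu_w$-weight is controlled independently of which branch it traverses; this is where Schioppa's rerouting arguments must be translated into the Laakso setting and where the dependence of the Poincar\'e constants on $w$ becomes delicate. Once this is achieved, the doubling, singularity, and differentiability statements of Theorem \ref{rademachermuw} all follow.
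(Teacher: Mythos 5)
Your construction of $\mu_w$ as the push-forward of $\mathcal{L}^1\otimes\nu_w$ under the quotient map, the doubling argument, and the strong-law-of-large-numbers singularity argument all match the paper. The divergence is in how you establish part (2), the differentiability statement, and here there is a genuine gap.

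You propose to prove a $(1,1)$-Poincar\'e inequality for $(F,d,\mu_w)$ and then invoke Cheeger's theorem. The paper emphatically does \emph{not} take this route: it adapts the explicit proof of Rademacher's theorem from \cite{CPS22} (using the density-point and approximate-continuity machinery available because $\mu_w$ is doubling, together with the structure of $\mu_w$ as a product in the $I$-direction). The Poincar\'e inequality is proved as a \emph{separate} result (Theorem \ref{thm:PI}), after Theorem \ref{rademachermuw}, precisely because the authors do not need it for differentiability. The deeper issue is the step you pass over in one sentence: ``Cheeger's chart in this setting is one-dimensional and coincides with the direction along which these curves move, [so] differentiability with respect to $h$ upgrades to genuine differentiability.'' Cheeger's theorem guarantees differentiability with respect to \emph{some} measurable decomposition into Lipschitz charts; it does not hand you the chart $(F,h)$, nor even that the analytical dimension is $1$. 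To deduce the conclusion in Definition \ref{differentiability}, you must show that the Cheeger differential $dh$ is nonvanishing $\mu_w$-a.e.\ and that the dimension of the measurable differentiable structure is exactly one, so that $h$ itself spans the cotangent space; only then does the abstract conclusion translate into the formula $f(y)-f(x)-Df(x)(h(y)-h(x))=o(d(x,y))$. The paper in fact notes (Section \ref{sec:pi}) that it is the explicit Theorem \ref{Rademacher} which shows the Cheeger chart can be taken to be $(F,h)$ — the implication runs in the direction opposite to the one you assume.

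A secondary but related gap is in the Poincar\'e argument itself. An Alberti representation by horizontal curves $t\mapsto[(t,k)]$ only connects points at different heights; two points at the same height with different Cantor coordinates must be joined through wormhole levels, and the $\mu_w$-mass on the two branches at a wormhole is asymmetric ($w$ versus $1-w$). You flag this as the ``principal obstacle'' and defer to ``Schioppa's rerouting arguments,'' which is fair as a roadmap but leaves the hardest part unproved. The paper's Section \ref{sec:pi} instead carries out a pointwise Poincar\'e inequality by an explicit chaining of rectangles $q(J\times K_x)$, using estimates \eqref{eq:simplePI1}--\eqref{eq:simplePI3} to control the transition across a wormhole and across Cantor-scale refinements; the measure-preserving translation $T:K_x\to K_{x'}$ is what makes the wormhole case work uniformly in $w$. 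If you want to salvage your route, you would need both to carry out this chaining (or an equivalent modulus estimate) in full, and to add the argument that $h$ is a non-degenerate chart — at which point you have essentially reproduced the explicit calculation the paper does directly.
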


 Each measure $\mu_{w}$ is the push forward of $\mathcal{H}^{1}\times \nu_{w}$ under the quotient map, where $\nu_{w}$ is a suitable measure on $K$. The measure $\nu_{w}$ is defined by assigning a proportion $w$ of the measure to the left similar copy of $K$ and a proportion $1-w$ of the measure to the right similar copy of $K$ at any stage in the construction of $K$. These measures are mutually singular for distinct $w$, while the measure $\nu_{1/2}$ is proportional to the natural Hausdorff measure on $K$. Due to the structure of $\mu_{w}$ (not least the Euclidean behavior in the $I$ direction and the fact that $\mu_{w}$ is doubling so the Lebesgue density theorem holds), one can adapt the explicit proof of Rademacher's theorem given in \cite{CPS22} to $\mu_{w}$ for any $w\in (0,1)$. This gives Theorem \ref{rademachermuw}.

Our second main result deduces the existence of measure zero UDS in Laakso space. It is an immediate consequence of Theorem \ref{rademachermuw}. To describe the UDS, first denote the left and right similar copies of the middle third Cantor set $K$ by $K_{0}$ and $K_{1}$ respectively. Similarly we define $K_{a}$ for any finite string $a$ of $0$'s and $1$'s. For integer $n\geq 1$, define $X_{n}:K\to \mathbb{R}$ by $X_{n}= 1$ on $K_{a0}$ and $X_{n}= 0$ on $K_{a1}$ for any binary string $a$ of length greater or equal to $0$. We set $Q:=1+ (\log 2/\log 3)$, noting that $F$ is Ahlfors $Q$-regular.

 \begin{theorem}\label{thmuds}
There exists a Borel set $N\subset F$ with $\mathcal{H}^{Q}(N)=0$ such that every Lipschitz map $f\colon F\to \mathbb{R}$ is differentiable at a point of $N$.

More precisely, we can choose $N$ to be $q(I\times E_{w})$ for any $w\neq 1/2$, where the map $q\colon I\times K\to F$ is the quotient mapping sending $x$ to $[x]$ and
\[E_{w}=\{x\in K: \frac{1}{n}\sum_{k=1}^{n}X_{n}(x)\to w \mbox{ as }n\to \infty\}.\]
 \end{theorem}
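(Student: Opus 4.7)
The plan is to deduce Theorem \ref{thmuds} directly from Theorem \ref{rademachermuw}. The guiding idea is that, for a fixed $w\neq 1/2$, the set $N:=q(I\times E_w)$ should be simultaneously of full $\mu_w$-measure and of zero $\mathcal{H}^Q$-measure. Once this is established, part (2) of Theorem \ref{rademachermuw} immediately gives the conclusion: any Lipschitz $f\colon F\to\R$ is differentiable $\mu_w$-a.e., and a $\mu_w$-null set cannot cover a set of positive $\mu_w$-measure, so $f$ must be differentiable at some point of $N$.

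The key computation is the strong law of large numbers on $K$. Under $\nu_w$, the coordinate indicators $X_1,X_2,\ldots$ are i.i.d.\ Bernoulli$(w)$, because $\nu_w$ is the self-similar measure with branch weights $w$ and $1-w$; hence SLLN yields $\nu_w(E_w)=1$ and, crucially, $\nu_{w'}(E_w)=0$ whenever $w'\neq w$. Specializing to $w'=w$, and using $\mu_w=q_*(\mathcal{H}^1\times \nu_w)$, gives $\mu_w(N)=\mathcal{H}^1(I)>0$. Specializing instead to $w'=1/2$ gives $\nu_{1/2}(E_w)=0$, and hence $\mu_{1/2}(N)=0$.

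To pass from $\mu_{1/2}(N)=0$ to $\mathcal{H}^Q(N)=0$ I would use the fact that $\mathcal{H}^Q$ on $F$ and $\mu_{1/2}$ are mutually absolutely continuous: both are doubling, Ahlfors $Q$-regular measures on the same space, and $\nu_{1/2}$ is already known (from the paragraph after Theorem \ref{rademachermuw}) to be proportional to $\mathcal{H}^{\log 2/\log 3}$ on $K$. The mild obstacle to flag is justifying this equivalence rigorously, together with the Borel-measurability of $N=q(I\times E_w)$; both are standard but require care because of the quotient identifications in the construction of $F$. All remaining steps are immediate from Theorem \ref{rademachermuw} and the SLLN.
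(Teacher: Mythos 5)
Your argument coincides with the paper's own proof: the paper invokes Proposition~\ref{singular} to obtain $\mu_w(N_w)>0$ and $\mu_{1/2}(N_w)=0$ (the latter ultimately resting on the strong law of large numbers via Proposition~\ref{nusingular}), and Proposition~\ref{consistent} to convert $\mu_{1/2}$-nullity into $\mathcal{H}^Q$-nullity, exactly the two ingredients you re-derive inline before applying Theorem~\ref{rademachermuw}. Same route, same key steps.
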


Historically, Rademacher's theorem for general metric spaces was discovered by Cheeger for spaces that satisfy a Poincar\'e inequality \cite{Che99}.
Despite this, a special feature of our work is that we do not use a Poincar\'e inequality to prove the above results. However, as recognized in \cite{SEB}, for a special class of metric measure spaces (called RNP-differentiability spaces), Rademacher theorem implies a Poincar\'e inequality. Thus, it is relevant to study whether the presently studied spaces also satisfy a Poincar\'e inequality. This also draws a closer parallel to the work of Schioppa in \cite{Sch15}, and shows that this work completely extends to Laakso spaces. Indeed, it shows that the Laakso space admits an uncountable family of mutually singular measures supporting a Poincar\'e inequality. We defer to Section \ref{sec:pi} for definitions and a more detailed discussion.

\begin{theorem}\label{thm:PI}
  For every $w\in(0,1)$ the space $(F,d,\mu_w)$ satisfies a $(1,1)$-Poincar\'e inequality.  
\end{theorem}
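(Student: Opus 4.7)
The plan is to prove the $(1,1)$-Poincar\'e inequality via a pencil-of-curves argument adapted to $\mu_w$. The Laakso space $F = (I \times K)/{\sim}$ carries a natural fibration by vertical curves---the images under the quotient map $q$ of segments $I \times \{x\}$ for $x \in K$---and by construction $\mu_w$ is the push-forward of the product measure $\mathcal{H}^1 \times \nu_w$. Fubini along this fibration will be the mechanism that converts line integrals of upper gradients into $\mu_w$-integrals.

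First I would record that $\mu_w$ is doubling (already established in Theorem \ref{rademachermuw}) and reduce the Poincar\'e inequality to the classical sufficient condition: for every ball $B \subset F$ of radius $r$ and every pair of points $p, q \in B$, it suffices to produce a probability measure $\sigma_{p,q}$ on a family of rectifiable curves $\{\gamma_z\}_{z \in \Omega_{p,q}}$ joining $p$ to $q$ inside $\lambda B$, each of length $\leq Cr$, such that
\[
|f(p) - f(q)| \leq \int_{\Omega_{p,q}} \int_{\gamma_z} g \, ds \, d\sigma_{p,q}(z)
\]
for every Lipschitz $f$ with upper gradient $g$, and such that the resulting bound integrates well against $\mu_w \otimes \mu_w$ on $B \times B$. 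The curve family exploits the Laakso wormhole structure: given $p = [(s, x)]$ and $q = [(t, y)]$, choose a wormhole level $n$ at height in $[s - Cr, t + Cr]$ that identifies $x$ and $y$, let $\Omega_{p,q} \subset K$ be the depth-$n$ cylinder containing $x$ and $y$, and set $\sigma_{p,q} = \nu_w|_{\Omega_{p,q}}/\nu_w(\Omega_{p,q})$. For each $z \in \Omega_{p,q}$, let $\gamma_z$ be a concatenation of vertical segments along coordinates $x$, $z$, $y$ glued via two wormhole jumps.

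The main obstacle is the Fubini/absorption step. Because the $z$-varying pieces of $\gamma_z$ are vertical segments moving in the $I$-direction while $z$ runs through $K$, and because $\mu_w$ is a product under the quotient, the double integral $\int_{\Omega_{p,q}} \int_{\gamma_z} g \, ds \, d\sigma_{p,q}(z)$ collapses (up to a factor of $1/\nu_w(\Omega_{p,q})$) to $\int_S g \, d\mu_w$ on a slab $S = q([s - Cr, t + Cr] \times \Omega_{p,q})$. The doubling property of $\nu_w$, whose constant depends on $w$ and deteriorates as $w \to 0$ or $1$, ensures that $\nu_w(\Omega_{p,q})$ is comparable to $\nu_w(\pi_K(B))$ and that $\mu_w(S) \lesssim \mu_w(\lambda B)$. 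Integrating the pointwise bound against $\mu_w \otimes \mu_w$ on $B \times B$ and dividing by $\mu_w(B)^2$ then yields the $(1,1)$-Poincar\'e inequality with the correct scaling in $r$. This is essentially the scheme used by Schioppa in \cite{Sch15}, specialized here to the Laakso fibration and to the measures $\nu_w$.
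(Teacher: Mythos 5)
Your approach is a genuinely different route from the paper's. The paper proves a \emph{pointwise} $(1,1)$-Poincar\'e inequality by \emph{chaining rectangles}: it establishes three elementary estimates (Cases A, B, C) that control the difference of averages of $f$ over adjacent rectangles $q(J\times K_x)$, builds a bi-infinite chain $\{Q_i\}_{i\in\Z}$ of such rectangles telescoping from $p$ to $q$ with $\diam(Q_i)\lesssim 3^{-|i|}d(p,q)$, sums the estimates, and finally invokes the equivalence of the pointwise and integral Poincar\'e inequalities (\cite[Theorem~8.1.7]{shabook}). You instead propose a \emph{pencil-of-curves} argument: a probability measure $\sigma_{p,q}$ on curves joining $p$ to $q$, with a Fubini collapse of the iterated line integrals against a slab, then integration over $B\times B$. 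Both are ``Schioppa-style'' in spirit, but the paper's version avoids dealing directly with curve families.

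There are, however, concrete gaps in your sketch. First, it is not true that one can ``choose a wormhole level $n$ \ldots that identifies $x$ and $y$'': a wormhole level of order $n$ only identifies pairs of Cantor coordinates that differ in the single bit $n$. If $d(p,q)\approx 3^{-N}$, Lemma~\ref{atmostone} only guarantees that $x$ and $y$ differ in \emph{at most one} bit of index $\leq N-1$; they may still differ in infinitely many bits of index $\geq N$. Consequently, your curves $\gamma_z$ cannot be realized with ``two wormhole jumps'' --- a typical jump from fiber $x$ to fiber $z$ (and from $z$ to $y$) must string together countably many wormhole passes at geometrically decreasing scales, just as in the paper's construction in Section~\ref{RademacherProof}. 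Second, and related, the Fubini absorption is only straightforward for the \emph{middle} portion of each $\gamma_z$, where the curve genuinely runs along the fixed fiber $z$ and the family sweeps a product $J\times\Omega_{p,q}$. The two end portions, where the curve is hopping through the infinite wormhole cascade from $x$ to $z$ and from $z$ to $y$, sweep a more complicated region of $F$, and one still needs to show that the corresponding contribution is bounded by $M_{\lambda d(p,q)}\Lip[f]$ evaluated at $p$ (resp.\ $q$). Nothing in your sketch addresses this. The paper's discrete chain $\{Q_i\}$ handles exactly this tail: the sets $Q_i$ for $i\to-\infty$ shrink geometrically toward $p$, so the telescoping sum produces precisely the Hardy--Littlewood maximal function at $p$ (and symmetrically at $q$), and the problematic ``small jumps'' never appear as such --- they are absorbed into the Case A/B/C estimates on rectangles, each of which contains a comparable ball by doubling. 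If you want to make the pencil-of-curves version rigorous, you will need to discretize the end pieces in much the same way, at which point the argument essentially becomes a continuous repackaging of the paper's rectangle chain. Finally, note one small inconsistency: if $x$ and $y$ differ at bit $n$, there is no depth-$n$ cylinder containing both; you presumably mean a depth-$(n-1)$ cylinder.
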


As pointed out by Laakso \cite{Laa00}, similar spaces can be constructed if the two-part Cantor set is replaced by a three-part Cantor set or another self similar set composed of parts which have positive distance from each other and where every part is isometric to the whole after scaling by a multiplicative factor. The authors would expect (but have not checked in details) that similar methods would yield measure zero differentiability sets in at least some of these examples. This is because the properties of the measures needed in our work are relatively flexible. For instance, to prove differentiability almost everywhere it is used that the measure is a (quotient of a) product of Lebesgue with a measure on the Cantor set and that the resulting measure is doubling. Of course, to find a measure zero universal differentiability set this should be possible for two measures which are mutually singular. For instance, for a three-part Cantor set one can aim to follow a similar construction assigning masses $a, b, c\in (0,1)$ to each component of the Cantor set where $a+b+c=1$. One would expect that Lipschitz functions are almost everywhere differentiable with respect to the (quotient of the) product of this measure with Lebesgue, and that if a similar construction is run with masses $a',b',c'$ then the resulting measures are singular if $a\neq a', b\neq b', c\neq c'$.

The organization of the paper is as follows. In Section \ref{prelim} we review relevant background, including the definition of Laakso space and the notion of derivatives in this context. In Section \ref{measures} we construct the measures and prove their main properties. In Section~\ref{RademacherProof} we show how the proof of Rademacher's theorem can be adapted from \cite{CPS22} and deduce the main results. We prove Theorem \ref{thm:PI} in Section \ref{sec:pi}.

\bigskip

\textbf{Acknowledgements:} The authors thank the anonymous reviewer for their careful reading of our manuscript and many insightful comments and suggestions.

A. Pinamonti is a member of {\em Gruppo Nazionale per l'Analisi Matematica, la Probabilit\`a e le loro Applicazioni} (GNAMPA), of the {\em Istituto Nazionale di Alta Matematica} (INdAM), and he was partially supported by MIUR-PRIN 2017 Project \emph{Gradient flows, Optimal Transport and Metric Measure Structures} and by the European Union under NextGenerationEU. PRIN 2022 Prot. n. 2022F4F2LH. 

G. Speight was partially supported by the National Science Foundation under Award No. 2348715 and partially supported by the Simons Foundation under Award No. 576219.

S. Eriksson-Bique was partially supported by the Finnish Research Council grant \#354241.

\section{Preliminaries}\label{prelim}
	The terminology and construction in this paper follow that of \cite{Laa00}. 
	Let $I=[0,1]$ and let $K\subset [0,1]$ the standard middle third Cantor set. Define $K_{0}:=(1/3)K$ and $K_{1}:=(1/3)K+(2/3)$ to be the left and right similar copies of $K$. We then define $K_{00}:=(1/3)K_0=(1/9)K$ and $K_{01}:=(1/3)K_1=(1/9)K+(2/9)$ to be the left and right similar copies of $K_{0}$. The set $K_{a}$ is defined similarly when $a$ is any finite string of $0$'s and $1$'s. We refer to such a string $a$ as a binary string.
	
	We define the height of a point $(x_{1}, x_{2})\in I\times K$ by $h(x_{1},x_{2}):=x_{1}$. If $n\in \bbN$ and $m_{i}\in \{0,1,2\}$ for $1\leq i\leq n$, we define $w(m_{1},\ldots, m_{n}):=\sum_{i=1}^{n} m_{i}/3^{i}$.
	A wormhole level of order $n$ is a set of the form
	\[\{w(m_{1},\ldots, m_{n})\} \times K \subset I\times K, \qquad m_{n}>0.\]
    We may also refer to the projection onto $I$, namely points $w(m_{1},\ldots, m_{n})\in I$ where $m_{n}>0$, as wormhole levels.
	
	\begin{definition}\label{def:wormhole}
		We define an equivalence relation $\sim$ on $I\times K$ as follows. For each $n\in \bbN$ and wormhole level $\{w(m_{1},\ldots, m_{n})\} \times K$ of order $n$, identify pairwise $\{w(m_{1},\ldots, m_{n})\} \times K_{a0}$ and $\{w(m_{1},\ldots, m_{n})\} \times K_{a1}$ for each binary string $a$ of length $n-1$. More precisely, a point $(x_1, x_2)\in \{w(m_{1},\ldots, m_{n})\} \times K_{a0}$ is identified with $(x_1, x_2+(2/3^n)) \in \{w(m_{1},\ldots, m_{n})\} \times K_{a1}$. Such an identified point is called a wormhole of order $n$.
	\end{definition}
	We denote the set of wormholes of order $n$ by $J_{n}:=\{w(m_{1},\ldots, m_{n}) : m_i \in \{0,1,2\}, m_n>0\} \times K$.
	Define $F:=(I\times K)/\sim$. Let $q\colon I\times K\to F$ be given by $q(x_1, x_2)=[x_1, x_2]$, where $[x_{1}, x_{2}]$ denotes the equivalence class in $F$ of $(x_{1},x_{2}) \in I\times K$. We define the height $h\colon F\to I$ by $h[x_{1},x_{2}]=x_{1}$. 
 We define a metric $d$ on $F$ by
	\[d(x,y)=\inf \{\mathcal{H}^{1}(p)\colon q(p) \mbox{ is a path joining }x \mbox{ and }y\},\]
	where $p\subset I\times K$. In \cite{Laa00} it is shown that any pair of points can be connected by a path and so the metric $d$ is well defined. The following proposition gives information about geodesics \cite[Proposition 1.1]{Laa00}.
	
	\begin{proposition}\label{prop_geodesic}
		Fix $x,y\in F$ with $h(x)\leq h(y)$. Let $[a,b]\subset I$ be an interval of minimum length that contains the heights of $x$ and $y$ and all the wormhole levels needed to connect those points with a path. Let $p$ be any path starting from $x$, going down to height $a$, then up to height $b$, then down to $y$. 
		
		Then $p$ is a geodesic connecting $x$ and $y$. All geodesics from $x$ to $y$ are of that form for some interval $[a',b']$ such that $b'-a'=b-a$.
	\end{proposition}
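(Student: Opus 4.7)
\smallskip

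The plan is to prove matching upper and lower bounds for the length of paths joining $x$ to $y$. The central idea is that the $\mathcal{H}^1$-length of any admissible path $p \subset I\times K$ controls the total variation of its height, since projection to the $I$-coordinate is $1$-Lipschitz and this projection is continuous across wormhole identifications (identified points share the same height).

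For the lower bound, I would fix any path $p$ from $x$ to $y$ and let $[a',b']$ denote the range of the height function along $p$. A change of Cantor coordinate from $x_2$ to $y_2$ can only occur at points of $p$ that pass through wormhole levels, so $[a',b']$ must contain every wormhole level needed to bridge $x_2$ and $y_2$, as well as $h(x)$ and $h(y)$ themselves. By the minimality of $[a,b]$ this forces $b'-a' \geq b-a$. Next, since the height along $p$ is a continuous function on the parameter interval that starts at $h(x)$, ends at $h(y)$, and attains both $a'$ and $b'$, picking times at which these extreme values are achieved and summing absolute differences of consecutive height values gives
\[
\mathcal{H}^1(p) \;\geq\; \mathrm{Var}(h\circ p) \;\geq\; (h(x)-a') + (b'-a') + (b'-h(y)) \;=\; 2(b'-a') - (h(y)-h(x)),
\]
and I would verify that among the possible orderings of the extrema this ``down--up--down'' ordering minimizes the total variation when $h(x)\leq h(y)$. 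Combining with $b'-a' \geq b-a$ yields the lower bound $\mathcal{H}^1(p) \geq 2(b-a) - (h(y)-h(x))$.

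For the upper bound, the path $p$ described in the proposition has height that decreases from $h(x)$ to $a$, increases from $a$ to $b$, then decreases from $b$ to $h(y)$, using wormhole levels inside $[a,b]$ to rearrange its Cantor coordinate as needed. Its $\mathcal{H}^1$-length equals the total height variation, which is exactly $(h(x)-a)+(b-a)+(b-h(y)) = 2(b-a)-(h(y)-h(x))$. This matches the lower bound, so every such path is a geodesic. The uniqueness statement then follows by tracing equality cases: any geodesic must realize $b'-a' = b-a$ and must saturate the total-variation bound, and the latter forces the height to be monotone on three successive subintervals of the parameter domain, giving the stated form.

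The main obstacle is the combinatorial step showing $b'-a' \geq b-a$: one must justify that the minimum-length interval $[a,b]$ accommodating all wormhole levels needed to connect $x_2$ to $y_2$ exists and is well defined, and that any competing path's height range must contain a length-$\geq\!(b-a)$ sub-interval hosting a suitable collection of wormhole levels. This amounts to analyzing the longest common ternary prefix of $x_2$ and $y_2$ (together with the auxiliary wormhole orders needed if $x_2, y_2$ lie in distinct high-order subcopies of $K$) and checking that wormhole levels of the required orders are distributed densely enough in $I$ that the minimizing interval is realized; this is essentially a tree-combinatorics argument and is the place where the detailed structure of Definition \ref{def:wormhole} enters.
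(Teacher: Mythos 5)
The paper does not prove this proposition; it is quoted verbatim from Laakso's original construction \cite[Proposition 1.1]{Laa00}, so there is no in-paper proof to compare against, and I assess your argument on its own.

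Your matching-bounds strategy is the right shape, and the step you flag as the main obstacle, $b'-a'\geq b-a$, is actually the easy part: the height range $[a',b']$ of any competing path automatically contains $h(x)$, $h(y)$, and every wormhole level used (a jump can only occur at a height the path attains), so $[a',b']$ is itself an admissible interval and minimality gives the inequality at once. The real gap is the anchoring inequality $\mathcal{H}^1(p)\geq \mathrm{Var}(h\circ p)$, which is false as stated: $\mathcal{H}^1(p)$ is the Hausdorff measure of the \emph{set} $p\subset I\times K$, while the total variation of the height along a parameterization $\gamma$ of $q(p)$ counts retraced segments with multiplicity, so when $\gamma$ backtracks one can have $\mathcal{H}^1(p)<\mathrm{Var}(h\circ\gamma)$. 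Concretely, take $x=[1/2,\,0]$ and $y=[1/2,\,2/3]$, which differ only in the first ternary digit, so one order-$1$ wormhole (say at height $1/3$) suffices and $b-a=1/6$. The path that descends from height $1/2$ to $3/10$, ascends to $1/3$, jumps the wormhole, and ascends to $1/2$ has height range $[3/10,1/2]$ and $\mathrm{Var}(h\circ\gamma)=2/5$, yet the underlying set is $([3/10,1/2]\times\{0\})\cup([1/3,1/2]\times\{2/3\})$ with $\mathcal{H}^1(p)=1/5+1/6=11/30<2/5$; your chain would then assert $\mathcal{H}^1(p)\geq 2(b'-a')=2/5$, which is false. (The ultimate bound $\mathcal{H}^1(p)\geq 1/3$ does hold in this example, but your derivation of it does not.) The repair is to first reduce to simple paths: excise loops from $\gamma$ until it is injective, which only shrinks the image set and hence $\mathcal{H}^1(p)$; for injective $\gamma$ one does have $\mathcal{H}^1(p)=\mathrm{Var}(h\circ\gamma)$, since the lift to $I\times K$ is injective off a countable set of wormhole crossings, and then your total-variation estimate goes through. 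Note also that the ``$1$-Lipschitzness of $h$'' reasoning you invoke only yields $\mathcal{H}^1(p)\geq\mathcal{H}^1(h(q(p)))=b'-a'$, not the stronger total-variation bound, so that justification needs replacing as well.
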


	Let $Q:=1+ (\log 2/\log 3)$. Note that $K$ is Ahlfors $(Q-1)$-regular. It is shown in \cite{Laa00} that $F$ is Ahflors $Q$-regular with respect to the metric $d$.
	
	\begin{definition}\label{def_laakso}
		The Laakso space is the set of equivalence classes $F:=(I\times K)/\sim$ equipped with the metric $d$ and Hausdorff dimension $\mathcal{H}^{Q}$.
	\end{definition} 
There are multiple ways that one can construct a Laakso space, and for simplicity we focus on the one particular construction that we gave, and call it the Laakso space. 

 Differentiability in $F$ is meant with respect to the Lipschitz chart $(F,h)$. This can be written explicitly as in the following definition.
	
	\begin{definition}\label{differentiability}
		Let $f\colon F\to \bbR$ and $x\in F$. We say that $f$ is differentiable at $x$ if there exists $Df(x)\in \bbR$ such that
		\[ \lim_{y\to x} \frac{f(y)-f(x)-Df(x)(h(y)-h(x))}{d(y,x)}=0.\]
	\end{definition}

The Laakso space is known to be a PI space \cite{Laa00}, hence admits a differentiable structure consisting of Lipschitz charts with respect to which Lipschitz functions are almost everywhere differentiable \cite{Che99}. However, these results do not give the charts explicitly. The following theorem was proved explicitly in \cite{CPS22} with the Lipschitz chart $(F,h)$, so the notion of differentiability is as in Definition \ref{differentiability}.
	
	\begin{theorem}\label{Rademacher}
		Every Lipschitz function $f\colon F\to \bbR$ is differentiable almost everywhere.
	\end{theorem}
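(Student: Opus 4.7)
My plan is first to construct the candidate derivative via classical Rademacher on the vertical fibers, and then to upgrade this fiber differentiability to genuine $(F,h)$-differentiability at Lebesgue points by exploiting the wormhole structure.

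For each $c\in K$, the map $\iota_c\colon I\to F$, $t\mapsto q(t,c)$, is $1$-Lipschitz, so $f_c:=f\circ\iota_c$ is Lipschitz on $I$ and classical Rademacher gives differentiability of $f_c$ at $\mathcal{H}^1$-a.e.\ $t\in I$. Since $\mathcal{H}^Q$ on $F$ agrees up to a multiplicative constant with the pushforward under $q$ of the product $\mathcal{H}^1|_I\otimes\mathcal{H}^{Q-1}|_K$, Fubini yields a Borel function $g\colon F\to\mathbb{R}$ with $\|g\|_\infty\leq\Lip(f)$ such that $g(q(t,c))=f_c'(t)$ for $\mathcal{H}^Q$-a.e.\ point. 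This $g$ is the only reasonable candidate for $Df$.

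By Proposition \ref{prop_geodesic}, a geodesic $\gamma\colon[0,L]\to F$ from $x$ to $y$ decomposes into finitely many vertical segments on fibers, concatenated through wormhole identifications, with $L=d(x,y)$. Along each vertical segment $f\circ\gamma$ is absolutely continuous with derivative $\epsilon(s)g(\gamma(s))$ for a sign $\epsilon(s)\in\{\pm 1\}$ recording ascent or descent. Integrating gives $f(y)-f(x)=\int_0^L \epsilon(s)g(\gamma(s))\,ds$ and $h(y)-h(x)=\int_0^L \epsilon(s)\,ds$, and hence
\[ |f(y)-f(x)-g(x)(h(y)-h(x))| \;\leq\; \int_0^L |g(\gamma(s))-g(x)|\,ds. \]
Since $(F,d,\mathcal{H}^Q)$ is Ahlfors $Q$-regular and thus doubling, $\mathcal{H}^Q$-a.e.\ $x$ is a Lebesgue point of $g$, and the task reduces to showing that this integral is $o(L)$ as $y\to x$.

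The main obstacle is passing from the $Q$-dimensional Lebesgue point condition on $g$ to control of a $1$-dimensional integral along a single geodesic. My approach is to realise $\gamma$ as one member of a large family $\{\gamma^\tau\}_{\tau\in T}$ of geodesics from $x$ to points in a neighborhood of $y$, obtained by routing through different wormhole levels in the various Cantor branches. This family can be arranged so that $\bigcup_\tau \gamma^\tau$ sweeps out a region of $\mathcal{H}^Q$-measure comparable to $L^Q$ inside $B(x,CL)$. A coarea-type identity then bounds the $\tau$-average of $\int_{\gamma^\tau}|g-g(x)|\,ds$ by a constant multiple of $L\cdot\dashint_{B(x,CL)}|g-g(x)|\,d\mathcal{H}^Q$, which vanishes at Lebesgue points of $g$. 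A standard maximal function argument then upgrades this averaged estimate to the uniform $o(L)$ bound required in Definition \ref{differentiability}. This coarea/sweeping step is where the dense arrangement of wormhole levels at every scale becomes indispensable and where the bulk of the technical work is concentrated.
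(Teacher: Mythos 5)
Your skeleton is sound and matches the paper's in several respects: both define the candidate derivative $f_I(x)$ via fiber (i.e., classical one-dimensional Rademacher plus Fubini), both decompose a geodesic from $x$ to $y$ into vertical sub-segments connected by wormhole jumps using Proposition~\ref{prop_geodesic}, and both reduce the problem to controlling an error integral along the geodesic. Two corrections before the main point: the geodesic generically uses \emph{infinitely} many wormhole levels, not finitely many, so the decomposition must be into countably many segments (the paper handles this by summing a geometric series of segment lengths, equation~\eqref{summu}); and $g$ must be defined pointwise wherever the fiber derivative exists, with care taken at wormholes where $f_L\neq f_R$ may occur.

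The genuine gap is in the final step, where you pass from the coarea-averaged bound to the claim that $\int_\gamma|g-g(x)|\,ds=o(L)$ for the \emph{specific} geodesic $\gamma$ from $x$ to $y$, uniformly in $y$. A maximal-function argument controls averages of $|g-g(x)|$ over balls, and hence averages of $\int_{\gamma^\tau}$ over the family $\tau$; it cannot, by itself, rule out that the one geodesic $\gamma=\gamma^{\tau_0}$ hitting your prescribed target $y$ is exceptional. This is the same obstruction as in $\mathbb{R}^2$: a.e.\ $x$ is a Lebesgue point of $g$, yet $\int_\sigma|g-g(x)|$ along a single line segment $\sigma$ need not be $o(|\sigma|)$. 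Closing the gap requires an extra input that is \emph{fiber-wise}, not just measure-wise: one must know that, along the fiber of a typical nearby point, the set of heights $t$ with $f_I[t,\cdot]$ close to $f_I(x)$ has high one-dimensional density. In the paper this is precisely the set $E_k^1(\varepsilon)$ (fiber-level density) layered under $E_k^2(\varepsilon)$ (measure-level approximate continuity of $f_I$ and density of $E_k^1$). Given those, the paper replaces each segment $[p_{i+1},p_i]$ by a nearby segment $[q_{i+1},q_i]$ with $q_{i+1}\in E_K^1(\varepsilon)\cap D_\varepsilon(x)$ (Lemma~\ref{qs}), so that the derivative integral along the replacement segment is controlled by the $\mathcal{L}^1$-density condition, and then uses the Lipschitz bound $|f(p_i)-f(q_i)|\le L\,d(p_i,q_i)$ to transfer the estimate back. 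Your coarea/sweeping picture is essentially the right tool for the \emph{Poincar\'e inequality} (and indeed Section~\ref{sec:pi} proves Theorem~\ref{thm:PI} by an analogous chaining argument), but a Poincar\'e inequality is an averaged statement; upgrading it to pointwise differentiability at a.e.\ $x$ needs the additional fiber-density bookkeeping that your plan omits. So the missing idea is a two-tier density argument (fiber density plus ambient density) together with segment-by-segment perturbation to a good nearby point and Lipschitz comparison, in place of the single averaged coarea bound.
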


 Note that it seems likely Theorem \ref{Rademacher} also follows from \cite[Chapter 9, Theorem 9.1]{CK15} on inverse limit spaces, once the Laakso space is recognized as such a space.

We will also make use of the directional derivatives defined below. As mentioned in \cite{CPS22}, this is a weaker requirement than being differentiable.

 \begin{definition}\label{def_vert_deriv}
		Let $f\colon F\to \bbR$ and $x=[x_{1},x_{2}]\in F$. 
		
		Suppose $x$ is not a wormhole. Whenever the limit exists, we define
		\begin{equation}\label{limit_vert_deriv}
			f_{I}(x):=\lim_{t\to 0} \frac{f[x_{1}+t,x_{2}]-f[x_{1},x_{2}]}{t}.
		\end{equation}
		The limit is one-sided if $x_{1}=0$ or $1$.
		
		Suppose $x$ is a wormhole of order $n$ and $(x_{1},x_{2})\in I\times K$ is the representative of $x$ with the smaller value of $x_{2}$. Whenever the limit exists, we define
		\begin{align*}
			f_{L}(x)&:=\lim_{t\to 0} \frac{f[x_{1}+t,x_{2}]-f[x_{1},x_{2}]}{t}\\
			f_{R}(x)&:=\lim_{t\to 0} \frac{f[x_{1}+t,x_2+(2/3^n)]-f[x_{1},x_2+(2/3^n)]}{t}.
		\end{align*}
		If $f_{L}(x)$ and $f_{R}(x)$ exist and are equal, we say that $f_{I}(x)$ exists and define it to be the common value. The limits are one-sided if $x_{1}=0,1$. 
	\end{definition}

\section{Singular Doubling Measures on $K$ and $F$}\label{measures}

\subsection{Measures on $K$}

Given $w\in (0,1)$ and a binary string $a$, let $p_{w}(K_{a}):=w^{s}(1-w)^{N-s}$ where $N$ is the length of the binary string $a$ and $s$ is the number of zeros in $a$. Intuitively, as the Cantor set is constructed by removing open middle thirds, $p_{w}$ assigns mass $w$ to the left similar copy and mass $1-w$ to the right similar copy.

\begin{proposition}\label{prop:measure}
For any $w\in (0,1)$, there is a unique Borel probability measure $\nu_{w}$ on $K$ so that $\nu_{w}(K_{a})=p_{w}(K_{a})$ for each binary string $a$. For any Borel set $E\subset K$, we have
\begin{equation}\label{nuformula} \nu_{w}(E)=\inf\left\{ \sum_{i=1}^{\infty}p_{w}(E_{i}): E\subset \bigcup_{i=1}^{\infty}E_{i}, \ E_{i} \mbox{ similar copies of }K\right\}.
\end{equation}
\end{proposition}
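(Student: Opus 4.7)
The plan is to construct $\nu_w$ as the push-forward of a Bernoulli product measure on $\{0,1\}^{\mathbb{N}}$, obtain uniqueness via a Dynkin class argument, and deduce the formula \eqref{nuformula} from outer regularity.

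First I would set up the existence. Let $B_w$ denote the Bernoulli product probability measure on $\Omega:=\{0,1\}^{\mathbb{N}}$ that assigns mass $w$ to the symbol $0$ and mass $1-w$ to the symbol $1$ at each coordinate; this measure exists by the standard Kolmogorov product construction. Let $\pi\colon\Omega\to K$ be the coding map sending $(a_1,a_2,\ldots)$ to the unique point of $\bigcap_{n\geq 1} K_{a_1a_2\cdots a_n}$. Since the diameters of the nested cylinders $K_{a_1\cdots a_n}$ go to zero, this intersection is a singleton and $\pi$ is a continuous bijection between compact spaces, hence a homeomorphism. For a binary string $a$ of length $N$ with $s$ zeros, $\pi^{-1}(K_a)$ is the cylinder set $\{\omega\in\Omega:\omega_i=a_i \text{ for } i\leq N\}$, so $\nu_w:=\pi_*B_w$ is a Borel probability measure on $K$ satisfying $\nu_w(K_a)=w^s(1-w)^{N-s}=p_w(K_a)$.

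For uniqueness, the family $\mathcal{C}:=\{K_a:a\text{ a binary string}\}\cup\{\emptyset,K\}$ is a $\pi$-system: any two similar copies $K_a$ and $K_b$ are either disjoint or one contains the other. Because the $K_a$ of common length $n$ form a clopen partition of $K$ with diameters tending to zero, $\mathcal{C}$ generates the Borel $\sigma$-algebra on $K$. By Dynkin's $\pi$-$\lambda$ theorem, any two Borel probability measures agreeing on $\mathcal{C}$ coincide on all Borel sets, giving uniqueness.

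Finally, for \eqref{nuformula}, denote the right-hand side by $\mu^*(E)$. The bound $\mu^*(E)\geq \nu_w(E)$ is immediate from countable subadditivity, since any admissible cover $\{E_i\}$ satisfies $\nu_w(E)\leq\sum \nu_w(E_i)=\sum p_w(E_i)$. For the reverse direction, fix $\varepsilon>0$ and use outer regularity of the Borel probability measure $\nu_w$ on the compact metric space $K$ to choose an open set $U\supset E$ with $\nu_w(U)\leq \nu_w(E)+\varepsilon$. Since the clopen sets $K_a$ form a basis for the topology of $K$ and are pairwise nested or disjoint, one can write $U$ as a disjoint union of maximal $K_a$ contained in $U$, yielding a countable collection $\{K_{a_i}\}$ with $U=\bigsqcup_i K_{a_i}$. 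Then $\sum_i p_w(K_{a_i})=\sum_i\nu_w(K_{a_i})=\nu_w(U)\leq \nu_w(E)+\varepsilon$, so $\mu^*(E)\leq \nu_w(E)+\varepsilon$, and letting $\varepsilon\to 0$ finishes the proof.

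I do not expect a serious obstacle here: each ingredient (Kolmogorov extension, the $\pi$-$\lambda$ theorem, outer regularity, the clopen basis of $K$) is standard. The only mild care is in checking the greedy decomposition of an open $U\subset K$ into a countable disjoint family of maximal similar copies, which follows because $\{K_a\}$ is a countable neighborhood basis of $K$.
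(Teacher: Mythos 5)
Your proof is correct and follows essentially the same approach as the paper: constructing $\nu_w$ as the push-forward of a Bernoulli product measure under the coding homeomorphism $\pi\colon\{0,1\}^{\mathbb{N}}\to K$. You supply more explicit detail than the paper on the uniqueness step (via Dynkin's $\pi$--$\lambda$ theorem) and on deriving \eqref{nuformula} (via outer regularity and the greedy decomposition of an open set into maximal similar copies $K_a$), where the paper simply notes that both facts follow because the similar copies generate the Borel $\sigma$-algebra.
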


\begin{proof}
Let $\Sigma = \{0,1\}^\bbN$ be equipped with the product topology. Define a homeomorphism $\pi:\Sigma \to K$ by sending an infinite binary string $(a_i)_{i=1}^\infty$ to $\sum_{i=1}^\infty 2a_i 3^{-i} \in K$. Let $\nu_{w,\Sigma}=\prod_{i=1}^\infty \nu_0$ be a Bernoulli probability  measure on $\Sigma$, that is the infinite product measure of binary probability measures $\nu_0$ on $\{0,1\}$ where $\nu_0(\{0\})=w$ and $\nu_0(\{1\})=1-w$. Let $\nu_w =\pi_*(\nu_{w,\Sigma})$. Then, $\nu_{w}(K_{a})=p_{w}(K_{a})$ follows directly from the definition of $\nu_{w}$. Equation \eqref{nuformula}follows since the similar copies of $K$ generate the Borel $\sigma$-algebra of $K$, and uniqueness of $\nu_{w}$ follows for the same reason.

\end{proof}

We equip $K$ with the induced Euclidean metric. Recall that a measure $m$ on a metric space $(X,d)$ is doubling if there is a constant $D>1$ so that for all balls $B(x,r)$ in $X$ it holds that $m(B(x,2r))\leq D \mu(B(x,r))$.

\begin{proposition}\label{nudoubling}
The probability measure $\nu_{w}$ is doubling on $K$ for any $w\in (0,1)$.
\end{proposition}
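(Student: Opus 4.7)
The plan is to prove doubling via a direct computation that exploits the self-similar structure of $K$ together with the multiplicative relations $\nu_w(K_{a0}) = w\, \nu_w(K_a)$ and $\nu_w(K_{a1}) = (1-w)\, \nu_w(K_a)$ obtained from Proposition~\ref{prop:measure}. Set $m := \min(w, 1-w)$ and $M := \max(w,1-w)$, and fix $x \in K$ and $r > 0$. For $r < 1$, I choose the unique integer $n \geq 0$ with $3^{-n-1} \leq r < 3^{-n}$, and let $K_a$ (of length $n+1$) and $K_{a'}$ (of length $n$) denote the similar copies containing $x$ at those two levels, with $a'$ the prefix of $a$ of length $n$.

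For the lower bound on $\nu_w(B(x,r) \cap K)$, since $\diam K_a = 3^{-n-1} \leq r$, the copy $K_a$ is entirely contained in $\overline{B}(x, r)$, so
\[
\nu_w(B(x,r) \cap K) \geq \nu_w(K_a) \geq m \cdot \nu_w(K_{a'}).
\]

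For the upper bound on $\nu_w(B(x,2r) \cap K)$, the key geometric point is that at most two level-$n$ similar copies can meet $B(x, 2r)$: the copy $K_{a'}$ itself and its sibling $K_{a''}$ obtained by flipping the last symbol of $a'$ (when $n\geq 1$). Indeed, any other level-$n$ copy $K_b$ differs from $a'$ in one of the first $n-1$ symbols, hence is separated from $K_{a'}$ by a gap produced at some stage $k\leq n-1$ of the Cantor construction, and the smallest such gap has width $3^{-(n-1)} = 3 \cdot 3^{-n}$, which strictly exceeds $2r$. Using the ratio bound $\nu_w(K_{a''})/\nu_w(K_{a'}) \leq M/m$ from the multiplicative formula, one obtains
\[
\nu_w(B(x,2r) \cap K) \leq \nu_w(K_{a'}) + \nu_w(K_{a''}) \leq \left(1 + \tfrac{M}{m}\right) \nu_w(K_{a'}).
\]

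Dividing the two estimates produces a doubling constant $D = (m + M)/m^2$ that depends only on $w$. The extremal cases $r \geq 1$, as well as $n = 0$ where the sibling is absent and one simply uses $\nu_w(K) = 1$, are handled separately with no worse constant. The only substantive step is the gap estimate ruling out all but two level-$n$ copies from meeting $B(x,2r)$, which must be read off carefully from the structure of the Cantor tree; I expect this to be the main obstacle, though still routine. Everything else reduces to the multiplicative bookkeeping encoded in Proposition~\ref{prop:measure}.
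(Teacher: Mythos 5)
Your argument is correct, and its overall strategy matches the paper's: bound $\nu_w(B(x,r))$ from below by the measure of a small similar copy of $K$ containing $x$ and contained in the ball, bound $\nu_w(B(x,2r))$ from above by confining the larger ball to a small family of similar copies via a gap estimate, and then compare using the multiplicative self-similarity $\nu_w(K_{a0})=w\,\nu_w(K_a)$, $\nu_w(K_{a1})=(1-w)\,\nu_w(K_a)$. The one place you genuinely diverge is the upper bound decomposition. The paper passes \emph{two} levels coarser, choosing $N$ with $3^{-N}<r\le 3^{-N+1}$ and showing $B(x,2r)\cap K$ lies in a \emph{single} level-$(N-2)$ copy $K_b$, because level-$(N-2)$ copies are separated by $3^{-(N-2)}>2r$. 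You instead pass only \emph{one} level coarser and allow $B(x,2r)\cap K$ to meet at most \emph{two} level-$n$ copies, $K_{a'}$ and its sibling $K_{a''}$, ruling out all others by the gap of width at least $3^{-(n-1)}>2r$. Both decompositions are valid; yours gives the slightly sharper constant $\min(w,1-w)^{-2}$ versus the paper's $w^{-2}(1-w)^{-2}$, at the small cost of tracking the sibling and its measure ratio $\nu_w(K_{a''})/\nu_w(K_{a'})\le M/m$. Your identification of the gap estimate as the only substantive step is also accurate, and your handling of the boundary cases ($n=0$ and $r\ge 1$) is fine since $\nu_w$ is non-atomic.
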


\begin{proof}
Let $x\in K$ and $0<r\leq 1/9$. Choose integer $N\geq 3$ with $\frac{1}{3^{N}}<r\leq \frac{1}{3^{N-1}}$. Notice $B(x,r)$ must contain a similar copy of the Cantor set $K_{a}$ which contains $x$ and where the binary string $a$ is of length $N$. If $s$ is the number of zeros in $a$, then we have
\[\nu(B(x,r))\geq \nu(K_{a})=w^{s}(1-w)^{N-s}.\]
On the other hand, $2r\leq \frac{2}{3^{N-1}}<\frac{1}{3^{N-2}}$. Since similar copies of level $N-2$ are separated by a distance $1/3^{N-2}$, this implies $B(x,2r)$ is contained inside $K_{b}$ where $b$ is the binary string equal to $a$ except with the last two entries deleted. Hence
\[\nu(B(x,2r))\leq \nu(K_{b})\leq w^{s-2}(1-w)^{N-s-2}.\]
This implies $\nu(B(x,2r))\leq w^{-2}(1-w)^{-2}\nu(B(x,r))$. Hence $\nu$ is doubling.
\end{proof}

We next recall the strong law of large numbers from probability theory \cite{Bil76}. Recall that in the context of a probability space $(\Omega,\Sigma,P)$, where $\Sigma$ is a $\sigma$-algebra and $P$ is a probability measure, a random variable $X:\Omega\to [-\infty,\infty]$ is an extended real-valued measurable function on $X$. The mean or expectation $E(X)$ is simply the integral of $X$ with respect to $P$ if it exists. An event holds with probability $1$ if it holds almost surely. 

The distribution function of a random variable $X$ is $F\colon \mathbb{R}\to [0,1]$ defined by $F(x)=P(X\leq x):=P(\{w:X(w)\leq x\})$. A collection of random variables is identically distributed if they have the same distribution function. 

A finite collection of random variables $X_{1},\ldots, X_{k}$ is independent if
\[P(X_{1}\leq x_{1},\cdots, X_{k}\leq x_{k})=P(X_{1}\leq x_{1})\cdots P(X_{k}\leq x_{k}).\]
An infinite collection of random variables is independent if each finite subcollection is independent. 

\begin{lemma}[Strong Law of Large Numbers]\label{stronglaw}
Let $X_{1}, X_{2}, \ldots$ be random variables on a probability space. Assume they are independent and identically distributed and have finite mean. Then $\frac{1}{n}\sum_{k=1}^{n}X_{k}\to E(X_{1})$ with probability $1$.
\end{lemma}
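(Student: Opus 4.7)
The plan is to prove this classical theorem via Etemadi's truncation-and-subsequence method, which handles the general case in which only finite mean is assumed. As a preliminary reduction, I would write $X_i = X_i^+ - X_i^-$ and note that since $\{X_i^+\}$ and $\{X_i^-\}$ are each iid sequences of nonnegative random variables with finite mean, it suffices to establish the result for nonnegative iid random variables. So assume $X_k \geq 0$ throughout.

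Next I would truncate: define $Y_k := X_k \mathbf{1}_{\{X_k \leq k\}}$ and $S_n' := \sum_{k=1}^n Y_k$. Since the $X_k$ are identically distributed,
\[ \sum_{k=1}^\infty P(X_k \neq Y_k) \; = \; \sum_{k=1}^\infty P(X_1 > k) \; \leq \; E(X_1) \; < \; \infty, \]
so the first Borel--Cantelli lemma gives $X_k = Y_k$ for all sufficiently large $k$ with probability $1$. It therefore suffices to prove $S_n'/n \to E(X_1)$ a.s. Since $Y_k \to X_1$ pointwise in a suitable sense and $0 \leq Y_k \leq X_1$, dominated convergence yields $E(Y_k) \to E(X_1)$; by Ces\`aro averaging the same holds for $\frac{1}{n}\sum_{k=1}^n E(Y_k)$, so it remains to show that $(S_n' - E(S_n'))/n \to 0$ a.s.

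Third, I would analyse $S_n'$ along a geometric subsequence. Fix $\alpha > 1$ and set $k_n := \lfloor \alpha^n \rfloor$. Chebyshev's inequality together with independence give
\[ P\!\left( |S_{k_n}' - E(S_{k_n}')| > \varepsilon k_n \right) \; \leq \; \frac{1}{\varepsilon^2 k_n^2} \sum_{j=1}^{k_n} E(Y_j^2). \]
Summing over $n$, interchanging the order of summation, and using $\sum_{n:\, k_n \geq j} k_n^{-2} \leq C j^{-2}$, matters reduce to showing $\sum_j E(Y_j^2) j^{-2} < \infty$, which follows by a layer-cake computation bounded by a constant multiple of $E(X_1)$. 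A further application of Borel--Cantelli then yields $(S_{k_n}' - E(S_{k_n}'))/k_n \to 0$ a.s. Finally I would interpolate: since $Y_k \geq 0$ the partial sums $S_m'$ are monotone increasing, so for $k_n \leq m \leq k_{n+1}$,
\[ \frac{k_n}{k_{n+1}} \cdot \frac{S_{k_n}'}{k_n} \; \leq \; \frac{S_m'}{m} \; \leq \; \frac{k_{n+1}}{k_n} \cdot \frac{S_{k_{n+1}}'}{k_{n+1}}, \]
and letting $n \to \infty$ and then $\alpha \downarrow 1$ recovers convergence along the full sequence.

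The main obstacle is the bookkeeping in this subsequence/interpolation step, and in particular verifying the layer-cake bound $\sum_j E(Y_j^2)/j^2 \leq C\, E(X_1)$, which crucially uses the truncation and the finite-mean hypothesis. For the application in this paper the random variables $X_n$ are bounded indicator functions, so one could alternatively invoke the much shorter fourth-moment argument: expand $E(S_n^4)$, use centering and independence to kill all but the diagonal and pair-diagonal terms to get $E(S_n^4) \leq C n^2$, and then apply Markov's inequality together with Borel--Cantelli to conclude $S_n/n \to E(X_1)$ almost surely.
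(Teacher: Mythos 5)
The paper does not actually prove Lemma \ref{stronglaw}: it is a classical textbook result stated without proof and cited from Billingsley \cite{Bil76}, so there is no ``paper's own proof'' to compare against. Your sketch is a correct outline of Etemadi's truncation-and-subsequence proof of the strong law, which is one of the standard arguments (and in fact establishes the stronger result that pairwise independence suffices). All the key steps are present and sound: the reduction to nonnegative variables via $X_i^{\pm}$; the truncation $Y_k = X_k\mathbf{1}_{\{X_k\le k\}}$ with Borel--Cantelli applied to $\sum_k P(X_1>k)\le E(X_1)$; the observation that $E(Y_k)=E(X_1\mathbf{1}_{\{X_1\le k\}})\to E(X_1)$ and hence Ces\`aro averages converge; the Chebyshev bound along the geometric subsequence $k_n=\lfloor\alpha^n\rfloor$; the layer-cake estimate $\sum_j E(Y_j^2)/j^2\lesssim E(X_1)$; and the monotonicity interpolation $\frac{k_n}{k_{n+1}}\frac{S'_{k_n}}{k_n}\le\frac{S'_m}{m}\le\frac{k_{n+1}}{k_n}\frac{S'_{k_{n+1}}}{k_{n+1}}$ followed by $\alpha\downarrow 1$. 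One phrase to tighten: ``$Y_k\to X_1$ pointwise in a suitable sense'' is not literally true since $Y_k$ is built from $X_k$, not $X_1$; what you actually use is the identity $E(Y_k)=E(X_1\mathbf{1}_{\{X_1\le k\}})$ from equidistribution, followed by monotone (or dominated) convergence. Your closing remark is also apt: the $X_n$ in the paper are $\{0,1\}$-valued indicators, so the much shorter fourth-moment argument ($E[(S_n-nE X_1)^4]\le Cn^2$ plus Markov and Borel--Cantelli) would suffice for the application, and the authors could equally well have cited that elementary version.
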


For each integer $n\geq 1$, define $X_{n}\colon K\to \mathbb{R}$ as follows. If $n=1$, then $X_{1}$ is identically $1$ on $K_{0}$ and identically $0$ on $K_{1}$. If $n>1$, then $X_{n}$ is identically $1$ on $K_{a0}$ and identically $0$ on $K_{a1}$ for any binary string $a$ of length $n-1$. Clearly $X_{n}$ is Borel measurable for each $n\geq 1$. Denote $S_{n}=\sum_{k=1}^{n}X_{k}$. For each $w\in (0,1)$, define the Borel set
\begin{equation}\label{Ew}
E_{w}=\{x\in K: S_{n}(x)/n \to w\}.
\end{equation}

\begin{proposition}\label{nusingular}
For any $w\in (0,1)$, we have $\nu_{w}(K\setminus E_{w})=0$ and $\nu_{w'}(E_{w})=0$ for all $w'\in (0,1)\setminus \{w\}$.

In particular, for all $w,w'\in (0,1)$ with $w\neq w'$, the probability measures $\nu_{w}$ and $\nu_{w'}$ are mutually singular.
\end{proposition}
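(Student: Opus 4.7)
The plan is to exploit the Bernoulli product representation of $\nu_w$ from Proposition \ref{prop:measure} and apply the strong law of large numbers. Under the homeomorphism $\pi\colon \Sigma\to K$, write a point $x\in K$ as $\pi(a_1, a_2,\ldots)$ with $a_i\in\{0,1\}$. By the definition of $K_0, K_1$ and iterating, one has $a_n=0$ if and only if $\pi(a)\in K_{a_1\cdots a_{n-1}0}$. Thus the function $X_n\circ \pi$ equals $1-a_n$, i.e., it depends only on the $n$-th coordinate of $\Sigma$.

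First I would record that under the Bernoulli product measure $\nu_{w,\Sigma}$, the coordinate functions $a_n$ are, by the very definition of a product measure, independent and identically distributed with $P(a_n=0)=w$. Pushing forward under $\pi$, it follows that $(X_n)_{n\geq 1}$ is an i.i.d.\ sequence on $(K,\nu_w)$ with each $X_n$ taking the value $1$ with probability $w$ and $0$ with probability $1-w$. In particular, the mean satisfies $\int_K X_n\,d\nu_w = w$.

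Next I would apply the strong law of large numbers (Lemma \ref{stronglaw}) to conclude that $S_n/n \to w$ holds on a subset of $K$ of full $\nu_w$-measure. Recalling the definition \eqref{Ew} of $E_w$, this is exactly the statement $\nu_w(K\setminus E_w)=0$, establishing the first half of the proposition.

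For the second half, observe that the sets $\{E_w\}_{w\in(0,1)}$ are pairwise disjoint: if $w\neq w'$ then the sequence $S_n(x)/n$ cannot converge to both $w$ and $w'$. Hence $E_w\subset K\setminus E_{w'}$, and the previous paragraph (applied with $w'$ in place of $w$) gives $\nu_{w'}(E_w)\leq \nu_{w'}(K\setminus E_{w'})=0$. Combining $\nu_w(E_w)=1$ with $\nu_{w'}(E_w)=0$ immediately yields mutual singularity of $\nu_w$ and $\nu_{w'}$. No step here is genuinely hard; the only point that needs to be stated carefully is the identification of $X_n$ with a single coordinate functional on $\Sigma$, which is precisely what yields independence under the product measure.
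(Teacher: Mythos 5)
Your proof is correct and follows essentially the same route as the paper: both use the Bernoulli product representation of $\nu_w$ from Proposition~\ref{prop:measure}, identify $X_n\circ\pi$ with a single coordinate functional (and hence with an i.i.d.\ sequence of mean $w$), invoke the strong law of large numbers, and then get $\nu_{w'}(E_w)=0$ by reapplying the same conclusion with $w'$ in place of $w$. The only cosmetic difference is that you spell out the disjointness of the sets $E_w$ for distinct $w$, whereas the paper compresses this to ``by the same argument.''
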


\begin{proof}

Fix $w\in (0,1)$. Recall the construction of $\nu_w$ from the proof of Proposition~\ref{prop:measure}. The measure $\nu_{w,\Sigma}$ is a Bernoulli probability measure and the random variables $Y_i:=X_i \circ \pi$ are of the form $1-Z_{i}$, where $Z_{i}$ are the independent and identically distributed projections onto the $i$'th component of $\Sigma$. In particular, $Y_{i}$ are independent and identically distributed. Hence, in $\Sigma$, $\frac{1}{n}\sum_{i=1}^n Y_i\to w$ almost surely with respect to $\nu_{w,\Sigma}$ by Lemma \ref{stronglaw}. Since $\pi$ is a measure preserving bijection, also $\frac{1}{n}\sum_{i=1}^n X_i(x)\to w$ for $\nu_{w}$-almost every $x\in K$. Thus, $\nu_w(E_w)=1$. Further, by the same argument, $\nu_{w'}(E_w)=0$ for any $w'\in (0,1)\setminus\{w\}$. Thus, $\nu_{w}$ and $\nu_{w'}$ are pairwise singular.

\end{proof}

\subsection{Measures on $F$}

Recall that $\mathcal{H}^{1}$ denotes the Hausdorff measure on $I$ with respect to the Euclidean distance. For any $w\in (0,1)$, we define
\begin{equation}\label{defmuw}
\mu_{w}=q_{\ast}(\mathcal{H}^{1}\times \nu_{w})
\end{equation}
and
\begin{equation}\label{Nw}
N_{w}=q(I\times E_{w}).
\end{equation}

 To prove that $\mu_{w}$ is doubling, the following simple lemma will be useful. 
	
	\begin{lemma}\label{atmostone}
		Suppose $x,y\in F$ with $d(x,y)<1$. Let $N\geq 1$ be the unique integer satisfying $1/3^{N}\leq d(x,y)< 1/3^{N-1}$. Then any geodesic joining $x$ to $y$ can pass through at most one wormhole of level less than or equal to $N-1$.

  In particular, suppose $x\in F$ and $0<r<1$. Let $N\geq 1$ be the unique integer satisfying $1/3^{N}\leq r< 1/3^{N-1}$. Then at vertical distance at most $r$ above and below $x$, one can find at most one wormhole with a level less than or equal to $N-2$.
	\end{lemma}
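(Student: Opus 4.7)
The plan is to reduce both parts to a single spacing fact for wormhole heights in $I$. My first step will be to observe that, by definition, every wormhole of level $n$ has height $w(m_1,\ldots,m_n)=\sum_{i=1}^n m_i 3^{-i}$, so the set of wormhole heights of level at most $n$ is contained in the arithmetic progression $\{k/3^n : 1 \leq k \leq 3^n - 1\}$. Consequently, any two distinct wormhole heights of level $\leq n$ differ by at least $1/3^n$, and any interval in $I$ of length strictly less than $1/3^n$ meets at most one such height.

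For the first claim, I will apply Proposition \ref{prop_geodesic} to write the geodesic from $x$ to $y$ as a down-up-down path through a minimal interval $[a,b]\subset I$, so that every wormhole traversed by the geodesic has its height in $[a,b]$. Summing the three vertical segment lengths yields $d(x,y)=2(b-a)-(h(y)-h(x))$, hence
\[
b - a \;=\; \frac{d(x,y) + h(y) - h(x)}{2} \;\leq\; d(x,y) \;<\; \frac{1}{3^{N-1}},
\]
where the middle inequality uses that $h$ is $1$-Lipschitz (which is immediate from the definition of $d$, since any path in $I\times K$ projects to a path of at least equal length in $I$). Applying the spacing observation with $n=N-1$ then shows that $[a,b]$ contains at most one wormhole height of level $\leq N-1$, which gives the first part.

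For the second claim, the set of heights within vertical distance $r$ of $h(x)$ is the interval $[h(x)-r,\,h(x)+r]$, whose length satisfies
\[
2r \;<\; \frac{2}{3^{N-1}} \;=\; \frac{2}{3}\cdot\frac{1}{3^{N-2}} \;<\; \frac{1}{3^{N-2}}.
\]
The spacing observation with $n=N-2$ then immediately yields at most one wormhole height of level $\leq N-2$ in this interval.

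I do not expect a serious obstacle: once the arithmetic progression structure of the wormhole heights is noted, the lemma becomes a direct consequence of two elementary length estimates. The most delicate point will be extracting the bound $b-a \leq d(x,y)$ from the down-up-down length formula via the $1$-Lipschitz property of $h$; after that, both parts reduce to the same observation that a short enough interval meets a $1/3^n$-spaced set at most once.
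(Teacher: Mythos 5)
Your proof is correct. For the second part, your argument is essentially identical to the paper's: both note that the interval of heights within $r$ of $h(x)$ has length $2r<2/3^{N-1}<1/3^{N-2}$, and that wormhole heights of level $\le N-2$ are spaced at least $1/3^{N-2}$ apart.

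For the first part, you take a different (and more self-contained) route. The paper simply cites \cite{CPS22}, whereas you derive it directly from the geodesic structure in Proposition~\ref{prop_geodesic}: writing the geodesic as a down--up--down path through an interval $[a',b']$ with $b'-a'=b-a$, computing
\[
d(x,y) = (h(x)-a)+(b-a)+(b-h(y)) = 2(b-a)-(h(y)-h(x)),
\]
and combining with the $1$-Lipschitzness of $h$ (i.e.\ $h(y)-h(x)\le d(x,y)$) to get $b-a\le d(x,y)<1/3^{N-1}$. Since any wormhole level used by the geodesic has its height in that interval, at most one height of level $\le N-1$ can occur. This is a clean argument whose only inputs are the already-stated Proposition~\ref{prop_geodesic} and the elementary spacing observation; the gain over the paper is that it makes the lemma fully self-contained within this manuscript. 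One minor point worth making explicit: the arithmetic-progression spacing bound rules out two distinct wormhole \emph{heights} (equivalently two distinct wormhole levels) in $[a',b']$, which is exactly what the conclusion ``at most one wormhole of level $\le N-1$'' is used for downstream; it is consistent with the strictly increasing enumeration $N_i$ of wormhole levels later in the paper.
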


 \begin{proof}
The first part of the Lemma was proved in \cite{CPS22}. To prove the second part notice that $(h(x)-r,h(x)+r)$ has length $2r$ and
\[2r<2/3^{N-1}<1/3^{N-2}.\]
Since wormholes of level less than or equal to $N-2$ are spaced apart by a distance $1/3^{N-2}$, the conclusion follows.
 \end{proof}

\begin{proposition}
For every $w\in (0,1)$, $\mu_{w}$ is a doubling measure with respect to the  metric $d$ on $F$.
\end{proposition}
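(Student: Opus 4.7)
The plan is to prove the two-sided estimate
\[ c \, r \, \nu_w(K_a) \leq \mu_w(B(x,r)) \leq C \, r \, \nu_w(K_a), \]
where $K_a$ is the similar copy of $K$ of level $N$ containing the second coordinate $x_2$ of a representative $(x_1,x_2)$ of $x$, and $N \geq 2$ is the integer with $1/3^N \leq r < 1/3^{N-1}$. Once this comparison is established, doubling follows immediately: comparing scales $r$ and $2r$ changes the relevant $N$ by at most $1$, and if $K_{a'}$ is the level $N-1$ similar copy containing $x_2$, then $\nu_w(K_{a'}) / \nu_w(K_a) \leq 1/\min(w, 1-w)$ is a constant depending only on $w$. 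One may also reduce to small scales: for $r \geq 1/3$, the inequality $\mu_w(B(x,2r)) \leq \mu_w(F) = 1 \leq \mu_w(B(x,r))/c$ is trivial since $\mu_w(B(x,1/3))$ is bounded below uniformly.

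For the upper bound I would analyze $q^{-1}(B(x,r)) \subset I \times K$ directly. Any point $(y_1,y_2)$ in the preimage satisfies $|y_1 - x_1| \leq r$ since heights differ by no more than the path length. For the $K$-coordinate, any geodesic from $x$ to $[y_1,y_2]$ traverses wormholes lying in the vertical slab $[x_1 - r, x_1 + r] \times K$. By the second part of Lemma~\ref{atmostone}, at most one wormhole of order $\leq N-2$ lies in this slab, and it can move $y_2$ by at most $2/3^{N-2} \leq 18 r$; wormholes of order $\geq N-1$ each move $y_2$ by at most $2/3^{N-1}$, and each additional jump costs extra vertical travel, so in aggregate the $y_2$-coordinate can only move by $O(r)$ in $K$. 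Consequently, $q^{-1}(B(x,r))$ is contained in $[x_1 - r, x_1 + r] \times U$ where $U \subset K$ is covered by a bounded number of similar copies of $K$ of level at least $N - c$. Taking $(\mathcal{H}^1 \times \nu_w)$-measure and using that $\nu_w$-measures of similar copies at nearby levels are comparable gives the upper bound.

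For the lower bound I would exhibit an explicit rectangle $R = [x_1 - r/C', x_1 + r/C'] \times K_{a''}$ with $q(R) \subset B(x,r)$, where $K_{a''}$ is the similar copy containing $x_2$ of level $N + c''$ for a sufficiently large fixed $c''$. To verify $q(R) \subset B(x,r)$, for each $y_2 \in K_{a''}$ I need a path from $(x_1,x_2)$ to $(y_1,y_2)$ of length $\leq r$; since $K_{a''}$ has $K$-diameter $\leq 1/3^{N+c''} \ll r$, the jump from $x_2$ to $y_2$ can be realized by passing through a single wormhole of order $\leq N+c''$, provided such a wormhole level lies within the vertical range $[x_1 - r/C', x_1 + r/C']$. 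This can always be arranged by choosing $c''$ large enough that wormholes of order $N + c''$ are sufficiently dense. The measure $(\mathcal{H}^1 \times \nu_w)(R) = (2r/C') \cdot \nu_w(K_{a''})$ is bounded below by $c \, r \, \nu_w(K_a)$ since $\nu_w(K_{a''})/\nu_w(K_a) \geq (\min(w,1-w))^{c''}$.

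The main obstacle is the lower bound, specifically locating an appropriate wormhole within the prescribed vertical window for every possible $x$. A case analysis based on the position of $x_1$ relative to nearby wormhole levels of order around $N + c''$ will be required, together with a separate treatment of the edge case when $x$ itself is (or is close to) a wormhole identification, in which case one may need to switch between the two representatives of $x$ and verify that the argument above works for at least one of them.
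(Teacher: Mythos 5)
Your overall strategy --- the two-sided estimate $c\,r\,\nu_w(K_a)\le\mu_w(B(x,r))\le C\,r\,\nu_w(K_a)$ against a single reference rectangle, combined with comparability of $\nu_w$-masses at nearby levels --- is the same as the paper's (the paper just estimates $\mu(B(x,r))$ from below and $\mu(B(x,2r))$ from above against slightly different levels and then takes the ratio; the bookkeeping is identical). The upper bound sketch is correct in spirit: Lemma~\ref{atmostone} limits the low-order wormholes in the slab $[x_1-r,x_1+r]$, so the preimage sits inside a bounded number of rectangles $[x_1-r,x_1+r]\times K_b$ with $|b|$ comparable to $N$.

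The genuine gap is in your lower bound, and it is conceptual, not merely a case analysis to be filled in. You write that for $y_2\in K_{a''}$ (with $|a''|=N+c''$) ``the jump from $x_2$ to $y_2$ can be realized by passing through a single wormhole of order $\le N+c''$.'' This is false for generic $y_2$. Two points of $K$ in the same small copy $K_{a''}$ have trinary expansions agreeing through position $N+c''$ but typically differing at infinitely many later positions; in the Laakso space each differing trit at position $M$ must be flipped by a separate wormhole of order $M>N+c''$, so a single wormhole never suffices, and no wormhole of order $\le N+c''$ is relevant at all (those move you \emph{out} of $K_{a''}$). Smallness of the Euclidean $K$-diameter of $K_{a''}$ does not translate into a one-wormhole connection. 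The fix is to use countably many wormholes and a geometric-series bound: wormhole levels of order $M$ recur with vertical spacing at most $2/3^{M}$, so a monotone vertical segment of length $r/2$ above $x_1$ meets at least one wormhole level of every order $M\ge N+2$ (since $2/3^{M}\le 2/3^{N+2}<r/2$). Traversing that segment and using the wormholes of orders $M\ge N+2$ as needed, one can flip every trit of $x_2$ at positions $\ge N+2$ and therefore reach any $y_2$ in the level-$(N+1)$ copy containing $x_2$, at total path length at most $r$. That yields $q^{-1}(B(x,r))\supset[x_1,x_1+r/2]\times K_{N+1}$, which is exactly the lower bound you want. Once this is corrected, your worries about ``locating an appropriate wormhole'' and the edge case of $x$ being a wormhole evaporate: the density of wormhole levels handles the former, and for the latter one simply uses whichever representative of $x_2$ is convenient.
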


\begin{proof}
We denote $\nu=\nu_{w}$ and $\mu=\mu_{w}$ for convenience. That $\mu$ is Borel follows from continuity of $q$. Fix $x=[x_{1},x_{2}]\in F$ and $0<r<1/3$. Fix an integer $N$ such that $1/3^{N}\leq r<1/3^{N-1}$.
		
		We  estimate $\mu(B(x,r))$ from below. Without loss of generality assume $x_{1}<2/3$, since otherwise one can apply a similar argument with upwards and downwards reversed. For each $M\geq 1$, wormholes of level $M$ are spaced apart by a distance at most $2/3^{M}$. If $M\geq N+2$ then $r/2\geq 2/3^{M}$. Hence, starting at $x$, one can reach by a curve of length at most $r$ any point $y=[y_{1},y_{2}]$ satisfying both:
		\begin{itemize}
			\item $x_{1}\leq y_{1}\leq x_{1}+r/2$, and
			\item $y_{2}$ is reached from $x_{2}$ by wormholes of level $M\geq N+2$.
		\end{itemize}
		Note that if $x$ is a wormhole level then either representative of $x_{2}$ may be used here. This shows that $q^{-1}(B(x,r))$ contains a set of the form $[x_{1},x_{1}+r/2]\times K_{N+1}$,  where $K_{N+1}\subset K$ is a similar copy of $K$ obtained after splitting $N+1$ times which contains $x_{2}$. Hence
		\begin{align*}
			\mu(B(x,r))&=(\mathcal{H}^{1}\times \nu)(q^{-1}(B(x,r))\\
			&\geq (r/2)\nu(K_{N+1}).
		\end{align*}
In particular, balls have strictly positive measure.

    	We next estimate $\mu(B(x,2r))$ from above. Notice $2r<1/3^{N-2}$. By Lemma~\ref{atmostone}, at vertical distance at most $2r$ above and below $x$, one can find at most one wormhole with a level less than or equal to $N-3$. Hence $q^{-1}(B(x,2r))$ is contained in a set of the form
		\[\Big( [x_{1}-2r,x_{1}+2r]\times K_{N-2}^{1}\Big) \cup \Big([x_{1}-2r,x_{1}+2r]\times K_{N-2}^{2}\Big),\]
		where $K_{N-2}^{1}, K_{N-2}^{2}$ are similar copies of $K$ obtained after splitting $N-2$ times. Note that one of the two similar copies (temporarily denoted $K_{a}$ for some binary string $a$) contains $x_{2}$, while the other is obtained by switching one of the entries of $K_{a}$ at a coordinate less than or equal to $N-3$. This leads to the estimate
		\begin{align*}
			\mu(B(x,2r))&=(\mathcal{H}^{1}\times \nu)(q^{-1}(B(x,2r)))\\
			&\leq 4r(\nu(K_{N-2}^{1})+\nu(K_{N-2}^{2})).
		\end{align*}

  Combining the two estimates yields
  \[\frac{\mu(B(x,2r))}{\mu(B(x,r))}\leq 8 \frac{(\nu(K_{N-2}^{1})+\nu(K_{N-2}^{2}))}{\nu(K_{N+1})}.\]
  The result follows because $\nu(K_{N-2}^{1})/\nu(K_{N+1})$ and $\nu(K_{N-2}^{2})/\nu(K_{N+1})$ are both bounded above by $\max(w^{-4}, (1-w)^{-4})$.  
\end{proof}

\begin{remark}
If $\nu$ is an arbitrary doubling measure on $K$, it does not necessarily follow that $\mu=q_{\ast}(\mathcal{H}^{1}\times \nu)$ is a doubling measure on $F$. For instance, given $0<\lambda<1$ and $0<\widehat{\lambda}<1$, define a measure $\nu$ on $K$ as follows. First assign measure $1/2$ to both the left and right similar copies $K_{0}$ and $K_{1}$ of $K$. Then, at any given stage, if $K_{a}$ is a similar copy for which $a$ starts with $0$ we assign a proportion $\lambda$ of the measure to $K_{a0}$ and $1-\lambda$ to $K_{a1}$, while if $K_{a}$ is a similar copy for which $a$ starts with $1$ we assign a proportion $\widehat{\lambda}$ of the measure to $K_{a0}$ and $1-\widehat{\lambda}$ to $K_{a1}$. Then, using a similar argument to that of Proposition \ref{nudoubling}, it is not difficult to see $\nu$ is a doubling measure on $K$. We claim $\mu:=q_{\ast}(\mathcal{H}^{1}\times \nu)$ is not doubling on $F$. To see this we consider for any $m\geq 1$ the open balls 
\[B_{m}=B\left(\left[\frac{1}{3}+\frac{1}{3^m},0\right],\frac{1}{3^{m}}\right), \qquad 2B_{m}=B\left(\left[\frac{1}{3}+\frac{1}{3^m},0\right],\frac{2}{3^{m}}\right).\]
Then $p^{-1}(B_{m})$ is contained in a set of the form $(1/3,1/3+2/3^{m})\times K_{b}$ where $|b|=m-1$ and $b$ begins with a $0$. Hence
\[\mu(B_{m})\leq \frac{2}{3^{m}}\frac{1}{2}\lambda^{m-2}=\frac{\lambda^{m-2}}{3^{m}}.\]
On the other hand, $p^{-1}(2B_{m})$ contains a set $(1/3-1/3^{m},1/3+1/3^{m})\times (K_{b}\cup K_{b'})$ where $b'$ agrees with $b$ except the first entry is $1$ rather than $0$. Hence
\[\mu(2B_{m})\geq \frac{2}{3^{m}}\left(\frac{1}{2}\lambda^{m-2}+\frac{1}{2}\widehat{\lambda}^{m-2}\right)=\frac{1}{3^{m}}\left(\lambda^{m-2}+\widehat{\lambda}^{m-2}\right).\]
Hence
\[ \frac{\mu(2B_{m})}{\mu(B_{m})}\geq 1+\left( \frac{\widehat{\lambda}}{\lambda}  \right)^{m-2}.\]
If $\widehat{\lambda}>\lambda$, letting $m\to \infty$ shows $\mu$ is not doubling. A similar argument applies if instead $\widehat{\lambda}<\lambda$, changing the center of the balls to the point $[\frac{1}{3}+\frac{1}{3^m},1]$.
\end{remark}

\begin{proposition}\label{singular}
For any $w\in (0,1)$, we have $\mu_{w}(F\setminus N_{w})=0$ and $\mu_{w'}(N_{w})=0$ for all $w'\in (0,1)\setminus \{w\}$.

In particular, for all $w,w'\in (0,1)$ with $w\neq w'$, the probability measures $\mu_{w}$ and $\mu_{w'}$ are mutually singular.
\end{proposition}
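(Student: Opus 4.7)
The plan is to reduce the statement to Proposition~\ref{nusingular} by pulling back through the quotient map $q$. By definition of pushforward, $\mu_{w'}(A) = (\mathcal{H}^1 \times \nu_{w'})(q^{-1}(A))$ for every measurable $A \subset F$, so it suffices to compute $(\mathcal{H}^1 \times \nu_{w'})(q^{-1}(N_w))$ in the two cases $w' = w$ and $w' \neq w$, and then invoke the singularity result already established for $\nu_w$ on $K$.

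The first step I would carry out is the two-sided inclusion
\[
I \times E_w \;\subseteq\; q^{-1}(N_w) \;\subseteq\; (I \times E_w) \cup W,
\]
where $W := \bigcup_{n \geq 1} J_n$ is the set of all wormhole points in $I \times K$. The left inclusion is immediate from the definition $N_w = q(I \times E_w)$. For the right inclusion I would use that the equivalence relation $\sim$ in Definition~\ref{def:wormhole} is trivial away from $W$: if $(x_1, x_2) \notin W$ then its equivalence class is a singleton, so membership of $(x_1,x_2)$ in $q^{-1}(N_w)$ forces $(x_1,x_2)$ itself to lie in $I \times E_w$.

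The second step is to show that $W$ is null for every product measure $\mathcal{H}^1 \times \nu_{w'}$. Each $J_n$ is a finite union of vertical slices of the form $\{t\} \times K$, each such slice has $\mathcal{H}^1$-measure zero in the first coordinate, and so $(\mathcal{H}^1 \times \nu_{w'})(J_n) = 0$ by Fubini; summing over $n$ gives $(\mathcal{H}^1 \times \nu_{w'})(W) = 0$. Combining with the inclusions above and one more application of Fubini yields
\[
(\mathcal{H}^1 \times \nu_{w'})(q^{-1}(N_w)) = (\mathcal{H}^1 \times \nu_{w'})(I \times E_w) = \nu_{w'}(E_w).
\]
By Proposition~\ref{nusingular} this equals $1$ when $w' = w$ and $0$ when $w' \neq w$. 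Since $\mu_w(F) = 1$, the first identity gives $\mu_w(F \setminus N_w) = 0$ and the second gives $\mu_{w'}(N_w) = 0$, after which mutual singularity is automatic: the sets $N_w$ and $F \setminus N_w$ witness it directly.

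The main point requiring care is a minor measurability issue: $N_w = q(I \times E_w)$ is the continuous image of a Borel set, hence a priori only analytic in $F$. However, the inclusions above show that $q^{-1}(N_w)$ differs from the Borel set $I \times E_w$ only by a subset of the $(\mathcal{H}^1 \times \nu_{w'})$-null set $W$, so $q^{-1}(N_w)$ is measurable with respect to the completion of each product measure and the pushforward computation goes through. I expect this bookkeeping, rather than any genuine difficulty, to be the only subtle part of the argument.
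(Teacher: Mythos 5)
Your proposal is correct and matches the paper's argument: both reduce to Proposition~\ref{nusingular} by observing that $q^{-1}(N_w)$ differs from $I \times E_w$ only inside the countable union of wormhole slices, a set of $\mathcal{H}^1 \times \nu_{w'}$-measure zero, so the product-measure computation transfers directly. Your closing remark on the a priori analyticity of $N_w$ is a careful point the paper leaves implicit, but it changes nothing in the substance of the argument.
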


\begin{proof}
Fix any $w,w'\in (0,1)$ with $w\neq w'$. By Proposition \ref{nusingular}, we know that $\nu_{w}(K\setminus E_{w})=\nu_{w'}(E_{w})=0$. Recall $N_{w}=q(I\times E_{w})\subset F$. Note that the symmetric difference of $q^{-1}(q(I\times E_{w}))$ and $I\times E_{w}$ is contained in a set of the form $C\times K$ where $C$ is countable, hence has measure zero with respect to $\mathcal{H}^{1}\times \nu_{w}$. Hence
\begin{align*}
\mu_{w}(F\setminus N_{w})&=(\mathcal{H}^{1}\times \nu_{w})((I\times K)\setminus q^{-1}(q(I\times E_{w})))\\
&= (\mathcal{H}^{1}\times \nu_{w})((I\times K)\setminus (I\times E_{w}))\\
&= (\mathcal{H}^{1}\times \nu_{w})(I\times (K\setminus E_{w}))\\
&=\nu_{w}(K\setminus E_{w})\\
&=0.
\end{align*}
On the other hand, we have
\begin{align*}
\mu_{w'}(N_{w})&= (\mathcal{H}^{1}\times \nu_{w'})(q^{-1}(q(I\times E_{w})))\\
&= (\mathcal{H}^{1}\times \nu_{w'})(I\times E_{w})\\
&= \nu_{w'}(E_{w})\\
&=0.
\end{align*}
This proves the first part of the proposition. The second is then an immediate consequence.
This concludes the proof.
\end{proof}

We now briefly describe how $\mu_{1/2}$ is related to $\mathcal{H}^{Q}$ which is the measure normally used on $F$. Note, since $K$ is $(Q-1)$-Ahlfors regular, we have $0<\mathcal{H}^{Q-1}(K)<\infty$.

\begin{proposition}\label{consistent}
For every Borel set $E\subset K$ we have \[\nu_{1/2}(E)=\frac{\mathcal{H}^{Q-1}(E)}{\mathcal{H}^{Q-1}(K)}.\]
Hence for every Borel set $E\subset F$ we have
\[\mu_{1/2}(E)=\frac{q_{\ast}(\mathcal{H}^{1}\times \mathcal{H}^{Q-1})(E)}{\mathcal{H}^{Q-1}(K)}.\]
Consequently, there exists a constant $C\geq 1$ such that for every Borel set $E\subset F$
\[C^{-1}\mathcal{H}^{Q}(E)\leq \mu_{1/2}(E)\leq C\mathcal{H}^{Q}(E),\]
where $\mathcal{H}^{Q}$ denotes the Hausdorff measure of dimension $Q$ on $F$ with metric $d$.
\end{proposition}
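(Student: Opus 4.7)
The plan is to establish the three identities in sequence, with the first requiring substance and the remaining two following formally or by standard arguments. For the first identity, both $\nu_{1/2}$ and the normalized Hausdorff measure $E \mapsto \calH^{Q-1}(E)/\calH^{Q-1}(K)$ are Borel probability measures on $K$, so by the uniqueness statement in Proposition \ref{prop:measure} it suffices to verify that they agree on every similar copy $K_a$. To this end I would use that $K_a$ is the image of $K$ under a similarity of ratio $3^{-|a|}$ together with Ahlfors $(Q-1)$-regularity of $K$, giving $\calH^{Q-1}(K_a) = 3^{-|a|(Q-1)}\calH^{Q-1}(K)$. Because $Q-1 = \log 2/\log 3$, one has $3^{-(Q-1)}=1/2$, so $\calH^{Q-1}(K_a)/\calH^{Q-1}(K) = 2^{-|a|} = p_{1/2}(K_a) = \nu_{1/2}(K_a)$, as required.

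The second identity then follows formally by substituting the first into $\mu_{1/2} = q_{\ast}(\calH^{1} \times \nu_{1/2})$ and using linearity of product measure and pushforward in the second factor. For the third identity, I would first upgrade the ball estimates from the proof that $\mu_{1/2}$ is doubling to show that $\mu_{1/2}$ is in fact Ahlfors $Q$-regular: for $w=1/2$ and $1/3^{N}\leq r<1/3^{N-1}$, the lower bound appearing there gives $\mu_{1/2}(B(x,r)) \geq (r/2)\nu_{1/2}(K_{N+1}) = (r/2)2^{-(N+1)}$, while the analogous upper bound on $\mu_{1/2}(B(x,2r))$ is also a constant times $r\cdot 2^{-N}$; using $2^{-N} = 3^{-N(Q-1)}$, both sides are comparable to $r^{Q}$. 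Combined with Ahlfors $Q$-regularity of $\calH^{Q}$ on $F$, both measures then satisfy $C^{-1}r^{Q} \leq m(B(x,r)) \leq Cr^{Q}$ for every ball.

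The uniform equivalence $C^{-1}\calH^{Q}(E) \leq \mu_{1/2}(E) \leq C\calH^{Q}(E)$ on arbitrary Borel sets then follows by a standard covering argument: approximate $E$ from outside by countable disjoint collections of balls (e.g.\ via Vitali applied to open neighbourhoods of $E$), compare both measures ball by ball using simultaneous $Q$-regularity, and conclude by Borel regularity and outer regularity. The main obstacle I anticipate is precisely this final comparability step: passing from ball-by-ball comparability to set-by-set comparability is routine for doubling Ahlfors regular measures but requires some care in choosing the appropriate covering/density theorem, and it is here rather than in the self-similarity computation that the most attention is needed.
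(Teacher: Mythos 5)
Your proposal is correct and follows essentially the same route as the paper: for the first identity, both arguments verify that the normalized Hausdorff measure $E\mapsto \mathcal{H}^{Q-1}(E)/\mathcal{H}^{Q-1}(K)$ assigns mass $2^{-|a|}=p_{1/2}(K_a)$ to each similar copy $K_a$ and then invoke the uniqueness in Proposition~\ref{prop:measure}; the second identity follows formally; and for the third, the paper simply cites \cite{CPS22} for the comparability of two Ahlfors $Q$-regular measures, which is exactly the Ahlfors-regularity-plus-covering argument you sketch. The only cosmetic difference is that you propose to rederive the Ahlfors $Q$-regularity of $\mu_{1/2}$ from the doubling lemma's ball estimates and carry out the covering argument explicitly rather than citing it, which is a fine and self-contained alternative.
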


\begin{proof}
For any Borel set $E\subset K$, denote $\widetilde{\nu}(E)=\frac{\mathcal{H}^{Q-1}(E)}{\mathcal{H}^{Q-1}(K)}$. Then $\widetilde{\nu}$ is a probability measure on Borel subsets of $K$. It also holds that $\widetilde{\nu}(K_{a})=(1/2)^{N}=p_{1/2}(K_{a})$ for any binary string $a$ of length $N$. Since $\nu_{1/2}$ was a unique extension of $p_{1/2}$, it follows that $\widetilde{\nu}(E)=\nu_{1/2}(E)$ for any Borel set $E\subset K$. Hence the first part of the proposition follows.

The second part follows by definition of product measure and the definition of $\mu_{w}$ in \eqref{defmuw}. The third part follows by combining the second part with the fact that $q_{\ast}(\mathcal{H}^{1}\times \mathcal{H}^{Q-1})$ is bounded within constant multiples of $\mathcal{H}^{Q}$ since both are Ahlfors $Q$-regular, as explained in \cite{CPS22}. 
\end{proof}

\section{Rademacher's Theorem for a Singular Measure}\label{RademacherProof}

In this section we prove Theorem \ref{rademachermuw}. Before doing so, we describe how it can be combined with the results of the previous section to prove our main result Theorem~\ref{thmuds}.

   \begin{proof}[Proof of Theorem \ref{thmuds} from Theorem \ref{rademachermuw}]
Fix any $w\neq 1/2$ and consider the Borel set $N_{w}$. By Proposition \ref{singular}, $\mu_{w}(N_{w})>0$. Hence, by Theorem \ref{rademachermuw}, each Lipschitz function $f\colon F\to \bbR$ is differentiable at some point of $N_{w}$ (with the point possibly depending on $f$). Since $w\neq 1/2$, applying Proposition \ref{singular} implies $\mu_{1/2}(N_{w})=0$. Hence, by Proposition \ref{consistent}, $\mathcal{H}^{Q}(N_{w})=0$.
  \end{proof}
	
We use the rest of this section to prove Theorem \ref{rademachermuw}. We divide the proof into several steps, following \cite{CPS22} with adjustments to account for the fact $\mu_{w}$ is doubling instead of $Q$-Ahlfors regular. For the remainder of this section fix $w\in (0,1)$ and denote $\nu=\nu_{w}$, $\mu=\mu_{w}$.
 
	\subsection{Measure Theoretic Preliminaries}

The following lemma follows by Tonelli's theorem. The proof is the same as in \cite{CPS22}, up to replacing $\mathcal{H}^{Q}$ with $\mu=q_{\ast}(\mathcal{H}^{1}\times \nu)$.
 
	\begin{lemma}\label{weakFubini}
		Suppose $A\subset F$ is Borel with respect to the metric $d$ and 
		\[\mathcal{L}^{1}\{t\in I : [t,z]\in A\}=0 \mbox{ for every }z\in K.\]
		Then $\mu(A)=0$.
	\end{lemma}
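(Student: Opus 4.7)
The plan is to unfold the definition of $\mu$ as a pushforward and then invoke Tonelli's theorem on the product space $I\times K$ equipped with $\mathcal{H}^{1}\times \nu$. Since the hypothesis concerns fibers $\{t\in I:[t,z]\in A\}$ parametrized by $z\in K$, Tonelli is the natural tool, and the only substantive work is lifting the fiber condition from $F$ back to $I\times K$ through $q$.

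First I would observe that since the quotient map $q\colon I\times K\to F$ is continuous and $A$ is Borel in $F$, the preimage $q^{-1}(A)$ is Borel in $I\times K$ with the product topology, hence measurable for $\mathcal{H}^{1}\times \nu$. By definition of pushforward,
\[\mu(A)=(\mathcal{H}^{1}\times \nu)(q^{-1}(A)).\]
Tonelli's theorem applied to the indicator of $q^{-1}(A)$ then yields
\[(\mathcal{H}^{1}\times \nu)(q^{-1}(A))=\int_{K}\mathcal{H}^{1}\bigl(\{t\in I:(t,z)\in q^{-1}(A)\}\bigr)\dd\nu(z),\]
and the measurability of the integrand in $z$ is the standard Tonelli measurability statement for Borel product sets.

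Next I would rewrite the inner slice: by definition of $q$, we have $(t,z)\in q^{-1}(A)$ if and only if $[t,z]\in A$, so
\[\{t\in I:(t,z)\in q^{-1}(A)\}=\{t\in I:[t,z]\in A\}.\]
On the interval $I$, the Hausdorff measure $\mathcal{H}^{1}$ coincides with Lebesgue measure $\mathcal{L}^{1}$, and the hypothesis of the lemma asserts that the slice above has $\mathcal{L}^{1}$-measure zero for every $z\in K$. Hence the integrand vanishes identically on $K$, and the integral is zero, giving $\mu(A)=0$.

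There is no serious obstacle here; the only point to be careful about is the measurability of the slice function on $K$, which is covered by the standard form of Tonelli applied to the Borel set $q^{-1}(A)$ in the complete product space. No doubling property of $\mu$ or structural property of Laakso space beyond continuity of $q$ is needed, which is why the argument of \cite{CPS22} for $\mathcal{H}^{Q}$ transports verbatim to $\mu_{w}$ once $\mathcal{H}^{Q}$ is replaced by $q_{\ast}(\mathcal{H}^{1}\times \nu)$.
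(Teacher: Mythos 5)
Your proposal is correct and follows exactly the same route the paper takes (via \cite{CPS22}): pull $A$ back to the Borel set $q^{-1}(A)$ in $I\times K$, apply Tonelli to the indicator with respect to $\mathcal{H}^{1}\times\nu$, identify the $t$-slice with $\{t\in I:[t,z]\in A\}$, and conclude from the hypothesis that the iterated integral vanishes. The only minor point worth noting, which you implicitly use, is that $\mathcal{H}^{1}$ on $I$ and $\nu$ on $K$ are finite measures on separable metric spaces, so the product Borel $\sigma$-algebra contains $q^{-1}(A)$ and Tonelli applies without reservation.
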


 The next lemma is as in \cite{CPS22}, except the measure $\mathcal{H}^{Q}$ is replaced by $\mu$. The proof is the same, since we may apply Lemma \ref{weakFubini} with the measure $\mu$ instead of $\mathcal{H}^{Q}$.
	
	\begin{lemma}\label{firstmeas}
		The following statements hold for every Lipschitz map $f\colon F\to \mathbb{R}$.
		\begin{enumerate}
			\item For every $z\in K$, the set
			\[D_{z}:=\{t\in I: \mbox{the directional derivative }f_{I}[t,z] \mbox{ exists}\}\]
			is Borel with respect to the Euclidean metric on $I$ and has full $\mathcal{L}^{1}$ measure.
			\item For every $z\in K$, the map from $D_{z}$ to $\bbR$ defined by $t\mapsto f_{I}[t,z]$ is Borel measurable with respect to the Euclidean metric on $I$.
			\item The set
			\[D:=\{x\in F: \mbox{the directional derivative }f_{I}(x) \mbox{ exists}\}\]
			is Borel measurable with respect to $d$ on $F$ and has full $\mu$ measure.
			\item The map $f_{I}\colon D\to \bbR$ defined by $x\mapsto f_{I}(x)$ is Borel measurable.
		\end{enumerate}	
	\end{lemma}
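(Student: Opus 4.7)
The plan follows the structure of the proof in \cite{CPS22}, with the only modification being that Lemma \ref{weakFubini} for $\mu = q_{\ast}(\mathcal{H}^{1}\times \nu)$ replaces the corresponding Fubini-type statement for $\mathcal{H}^{Q}$ used there. For parts (1) and (2), I would fix $z \in K$ and consider $g_{z}\colon I \to \mathbb{R}$ defined by $g_{z}(t) := f[t,z]$. Since the curve $t \mapsto [t,z]$ realizes length $|t-t'|$ between $[t,z]$ and $[t',z]$ in $F$, we have $d([t,z],[t',z]) \leq |t-t'|$, so $g_{z}$ is Lipschitz on $I$. The classical Rademacher theorem in $\mathbb{R}$ then yields that $g_{z}$ is differentiable $\mathcal{L}^{1}$-a.e.\ on $I$. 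Away from the countable set of wormhole heights $t$, the point $[t,z]$ is not a wormhole, and $f_{I}[t,z]$ in the sense of Definition \ref{def_vert_deriv} coincides with the classical derivative $g_{z}'(t)$; since the wormhole heights form a countable set, they contribute no $\mathcal{L}^{1}$-measure and pose no obstruction. Borel measurability of $D_{z}$ follows from a Cauchy criterion taken over rational increments applied to the continuous function $g_{z}$, and Borel measurability of $t \mapsto f_{I}[t,z]$ follows by writing it on $D_{z}$ as the pointwise limit $\lim_{n\to\infty} n(g_{z}(t + 1/n) - g_{z}(t))$ of continuous functions.

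For part (3), I would first establish that $D$ is Borel in $(F,d)$. The map $(t,z) \mapsto f[t,z]$ is continuous on $I \times K$, so the functions $\phi_{n}(t,z) := n(f[t+1/n, z] - f[t,z])$ are continuous on their domains, and the set of $(t,z)$ where $\phi_{n}(t,z)$ converges as $n \to \infty$ along rationals (together with the analogous one-sided limits at the two representatives of a wormhole, which are required to agree) is Borel in $I \times K$. Pushing forward through the quotient $q$, which is continuous with fibers of size at most two, and accounting separately for the countable wormhole set, identifies $D$ as a Borel subset of $F$. To show $\mu(F \setminus D) = 0$, I would apply Lemma \ref{weakFubini} to the Borel set $F \setminus D$: by part (1), for each $z \in K$ the slice $\{t \in I : [t,z] \notin D\} \subseteq I \setminus D_{z}$ has $\mathcal{L}^{1}$-measure zero, so the hypothesis of Lemma \ref{weakFubini} holds. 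For part (4), $f_{I}$ on $D$ can be expressed as the pointwise limit of the $\phi_{n}$'s (or a two-sided analogue), which shows it is Borel measurable.

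The main technical nuisance, as in \cite{CPS22}, is the careful bookkeeping at wormhole points, where Definition \ref{def_vert_deriv} requires the derivatives from the two representatives to coincide. This inserts an extra countable layer of Borel conditions into the definitions of $D_{z}$ and $D$ but does not alter the null-set conclusions, since the set of wormhole heights is countable in $I$ and their preimage in $F$ has $\mu$-measure zero. All other steps are routine consequences of continuity of $q$, the product structure of $\mathcal{H}^{1}\times \nu_{w}$, and the classical measurability properties of derivatives of Lipschitz functions on an interval.
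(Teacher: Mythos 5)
Your proposal is correct and follows essentially the same route the paper takes, namely deferring to the argument of \cite{CPS22} with $\mathcal{H}^{Q}$ replaced by $\mu$: slice-wise classical Rademacher for the Lipschitz functions $t\mapsto f[t,z]$, Borel measurability via rational Cauchy criteria, and Lemma \ref{weakFubini} to pass from the $z$-slice statements (1)--(2) to the full-measure statement (3). The only points you gloss over are standard and harmless: the image of a Borel set under $q$ is Borel because $q$ is a continuous countable-to-one map between Polish spaces (Lusin--Novikov), and the difference-quotient characterization of $D$ should use a two-sided rational Cauchy criterion rather than only the sequence $\phi_n$ with increments $1/n$, but this is what your ``Cauchy criterion over rational increments'' is already doing.
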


	Recall that if $(X,m)$ is a doubling metric measure space than the Lebesgue density theorem holds and Borel functions are approximately continuous almost everywhere. I.e. if $g\colon X\to \bbR$ is Borel, then for almost every $x\in X$
	\[ \lim_{r\to 0} \frac{m\{y\in B(x,r): |g(y)-g(x)|>\varepsilon\}}{m(B(x,r))}=0 \qquad \mbox{for every }\varepsilon>0. \]
 	We will use these facts in $\bbR$ equipped with Euclidean distance and Lebesgue measure and in $F$ equipped with the metric $d$ and measure $\mu$.

	\subsection{Auxilliary Sets}

	Let $f\colon F\to \mathbb{R}$ be a Lipschitz function and let $D\subset F$ denote the set of points where the directional derivative of $f$ exists. Denote $L=\mathrm{Lip}(f)$. Let $C_\mu\geq 1$ be the doubling constant of $\mu$. By iterating the doubling condition, there exists $Q>0$ and $C_{Q}\geq 1$ both depending only on $C_{\mu}$ such that for all $x\in F$, $0<r\leq R$, and $y\in B(x,R)$, we have
 \[ \frac{\mu(B(y,r))}{\mu(B(x,R))}\geq C_{Q}^{-1}\left( \frac{r}{R} \right)^{Q}.    \]
Note that wormholes have measure zero with respect to $\mu$. To see this notice that if $W\subset F$ is the set of wormholes, then $q^{-1}(W)$ is contained in a set of the form $C\times K$ where $C$ is countable. Hence
\begin{align*}
\mu(W)&=(\mathcal{H}^{1}\times \nu)(q^{-1}(W))\\
&\leq (\mathcal{H}^{1}\times \nu)(C\times K)\\
&= \mathcal{H}^{1}(C)\\
&=0.
\end{align*}
	\begin{definition}
		Let $D'$ be the set of all points $x\in D$ which are not a wormhole. We define several sets as follows.
		\begin{enumerate}
			\item For each $\varepsilon>0$ and $x\in D$,
			\[D_{\varepsilon}(x):=\{ y\in D:|f_{I}(y)-f_{I}(x)|\leq \varepsilon\}.\]
			\item For each $\varepsilon>0$ and integer $k\geq 1$, let $E_{k}^{1}(\varepsilon)$ be the collection of all points $x=[x_{1},x_{2}]\in D'$ such that
			\[\mathcal{L}^{1}\{t\in (x_{1}-r,x_{1}+r)\cap I:[t,x_{2}]\notin D_{\varepsilon}(x)\}\leq \varepsilon r\]
			for every $0<r<1/k$.
			\item For each $\varepsilon>0$ and integer $k\geq 1$, let $E_{k}^{2}(\varepsilon)$ be the collection of all points $x\in D'$ for which
			\begin{equation}\label{E2ineq}
			    \mu\Big( B(x,r)\setminus ( D_{\varepsilon}(x) \cap E_{k}^{1}(\varepsilon) ) \Big) \leq \frac{C_{Q}^{-1}\varepsilon^{Q}\min(w,1-w)}{2^{Q+2}} \mu(B(x,r))
       	\end{equation}
			for every $0<r<1/k$.
		\end{enumerate}

	\end{definition}

The proof of the next lemma is the same as in \cite{CPS22} with $\mathcal{H}^{Q}$ replaced by $\mu$, which is possible by applying Lemma \ref{weakFubini}.
	
	\begin{lemma}\label{E1}
		For all $\varepsilon>0$ and integer $k\geq 1$, $E_{k}^{1}(\varepsilon)$ is Borel with respect to $d$ and
		\[\mu\left( F\setminus \bigcup_{k=1}^{\infty} E_{k}^{1}(\varepsilon) \right) =0.\]	
	\end{lemma}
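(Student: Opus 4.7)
The plan is to prove the two assertions separately: the full-$\mu$-measure of $\bigcup_{k}E_{k}^{1}(\varepsilon)$ via the fiberwise criterion of Lemma~\ref{weakFubini}, and the Borel measurability via Tonelli's theorem after a countable reduction in $r$.

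For the measure-zero part, fix $z\in K$ and set $\varphi_{z}(t):=f_{I}([t,z])$. By Lemma~\ref{firstmeas}(1)--(2), $D_{z}$ has full $\mathcal{L}^{1}$-measure in $I$ and $\varphi_{z}$ is Borel on $D_{z}$. Every real-valued Borel function on $(I,\mathcal{L}^{1})$ is approximately continuous almost everywhere, so for $\mathcal{L}^{1}$-a.e.~$t_{0}\in D_{z}$,
\[
\lim_{r\to 0}\frac{\mathcal{L}^{1}\{s\in (t_{0}-r,t_{0}+r): |\varphi_{z}(s)-\varphi_{z}(t_{0})|>\varepsilon/2\}}{2r}=0.
\]
Combined with $\mathcal{L}^{1}(I\setminus D_{z})=0$, this forces $\mathcal{L}^{1}\{s\in (t_{0}-r,t_{0}+r)\cap I:[s,z]\notin D_{\varepsilon}([t_{0},z])\}\leq \varepsilon r$ once $r$ is small enough, hence $[t_{0},z]\in E_{k}^{1}(\varepsilon)$ for some $k$. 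Removing also the countable set of $t_{0}$ for which $[t_{0},z]$ is a wormhole, the exceptional set in $I$ is $\mathcal{L}^{1}$-null, and Lemma~\ref{weakFubini} yields $\mu(F\setminus\bigcup_{k}E_{k}^{1}(\varepsilon))=0$.

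For Borel measurability, write $x=[x_{1},x_{2}]$ and observe that
\[
r\mapsto g(x,r):=\mathcal{L}^{1}\{t\in(x_{1}-r,x_{1}+r)\cap I:[t,x_{2}]\notin D_{\varepsilon}(x)\}
\]
is left-continuous in $r$, so the condition ``$g(x,r)\leq\varepsilon r$ for every $r\in(0,1/k)$'' is equivalent to the same condition restricted to $r\in\mathbb{Q}\cap(0,1/k)$. For each such rational $r$, the set
\[
\{(x,t)\in D'\times I: t\in(x_{1}-r,x_{1}+r)\cap I,\ [t,x_{2}]\notin D_{\varepsilon}(x)\}
\]
is Borel after expanding $[t,x_{2}]\notin D_{\varepsilon}(x)$ as $[t,x_{2}]\notin D$ or $|f_{I}([t,x_{2}])-f_{I}(x)|>\varepsilon$, since $(x,t)\mapsto[t,x_{2}]$ is continuous on $D'\times I$ (as $q$ is a local homeomorphism away from wormholes) and both $D$ and $f_{I}$ are Borel by Lemma~\ref{firstmeas}. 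Tonelli's theorem then makes $g(\cdot,r)$ Borel on $D'$, and $D'=D\setminus W$ is itself Borel because $W$ is a countable union of height-level sets of the continuous height map $h$. A countable intersection completes the argument.

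The principal obstacle is the bookkeeping in the Borel step: one must check that $x_{2}$ depends continuously on $x$ on the wormhole-free set $D'$ and that the left-continuity of $g(x,\cdot)$ legitimises the reduction to rational $r$. The substantive analytic input, namely approximate continuity of Borel sections, is classical and drives the measure-zero half transparently.
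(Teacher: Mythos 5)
Your overall strategy is the standard one that the paper inherits from \cite{CPS22}: approximate continuity of the Borel section $t\mapsto f_{I}[t,z]$ on each fiber gives the a.e.\ inclusion in $\bigcup_{k}E_{k}^{1}(\varepsilon)$, and Lemma~\ref{weakFubini} lifts this to full $\mu$-measure; Borel measurability is reduced to a Tonelli computation for rational $r$ via left-continuity of $r\mapsto g(x,r)$. Both halves are correct, and the proof is essentially the same argument the authors point to.

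One justification you give, however, is wrong as stated. You assert continuity of $(x,t)\mapsto[t,x_{2}]$ on $D'\times I$ ``as $q$ is a local homeomorphism away from wormholes.'' The quotient map $q$ is \emph{not} a local homeomorphism at any point of $I\times K$: wormhole levels are dense in $I$, so every product neighborhood $(x_{1}-\delta,x_{1}+\delta)\times(K\cap(x_{2}-\delta,x_{2}+\delta))$ of a non-wormhole $(x_{1},x_{2})$ contains identified pairs coming from wormhole levels of high enough order $n$ (one only needs $2\cdot 3^{-n}<\delta$ and a level $t_{n}\in(x_{1}-\delta,x_{1}+\delta)$), so $q$ restricted there is never injective. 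What \emph{is} true, and what your argument actually needs, is that the map $x\mapsto x_{2}$ is continuous on the set of non-wormholes. This should be justified via the geodesic structure: if $x=[x_{1},x_{2}]$ is not a wormhole, then $x_{1}$ has positive distance $\delta_{m}$ from the finitely many wormhole levels of order $\leq m$; by Proposition~\ref{prop_geodesic} a geodesic from $x$ to $y$ sweeps a height interval of length at most $d(x,y)$, so if $d(x,y)<\delta_{m}$ it can only use wormholes of order $>m$, forcing $y_{2}$ to agree with $x_{2}$ in the first $m$ ternary digits and hence $|x_{2}-y_{2}|\leq 3^{-m}$. With this correction the Tonelli step goes through and the lemma is proved.

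Two minor remarks: the $\varepsilon/2$ in your approximate-continuity statement is harmless but unnecessary (threshold $\varepsilon$ already has density zero, and the definition of $E_{k}^{1}(\varepsilon)$ only requires the measure to be $\leq\varepsilon r$, not $\leq\varepsilon r/2$); and since Lemma~\ref{weakFubini} requires the set it is applied to be Borel, the Borel-measurability half should logically precede the measure-zero half, though this is only a matter of presentation.
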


 The proof of the next lemma requires minor adaptations from \cite{CPS22} to account for the change in measure. We first make two remarks.
 
 \begin{remark}\label{measurecontinuity}
 First we claim that $\lim_{s\to s_{0}}\mu(B(x,s))=\mu(B(x,s_{0}))$ for any $s_{0}>0$. To see this, note $\mu$ is a doubling measure on a length space so satisfies a $\delta$-annular decay property for some $0<\delta\leq 1$ depending only on the doubling constant of $\mu$ \cite{Buc99}. To be more specific, there is $K\geq 1$ such that for all $x\in F$, $r>0$, $0<\varepsilon<1$,
 \[ \mu(B(x,r)\setminus B(x,r(1-\varepsilon))) \leq K\varepsilon^{\delta}\mu(B(x,r)). \]
 From this, the claim clearly follows.

 Second we claim that $\lim_{x\to x_{0}}\mu(B(x,s))=\mu(B(x_{0},s))$ for any $x_{0}\in F$. To see this notice that $|\mu(B(x,s))-\mu(B(x_{0},s))|$ is bounded by the maximum of
\[\mu(B(x_{0},s+d(x,x_{0})))-\mu(B(x_{0},s))\]
and
\[\mu(B(x,s+d(x,x_{0})))-\mu(B(x,s)).\]
Both of these converge to zero as $x\mapsto x_{0}$, so the claim follows.
 \end{remark}
	
	\begin{lemma}\label{E2}
		For $\varepsilon>0$ and integer $k\geq 1$, $E_{k}^{2}(\varepsilon)$ is Borel with respect to $d$ and
		\[\mu\left( F\setminus \bigcup_{k=1}^{\infty} E_{k}^{2}(\varepsilon) \right) =0.\]	
	\end{lemma}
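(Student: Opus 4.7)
The plan is to mirror the proof of the analogous lemma in \cite{CPS22}, making two key adjustments: substituting $\mu$ for $\mathcal{H}^Q$, and replacing appeals to Ahlfors regularity with Remark~\ref{measurecontinuity} and the Lebesgue density theorem for doubling measures. The argument splits into establishing Borel measurability of $E_k^2(\varepsilon)$ and then showing $\mu(F \setminus \bigcup_k E_k^2(\varepsilon)) = 0$. The main obstacle will be the former: since $D_\varepsilon(x)$ depends on $x$ through the condition $|f_I(y) - f_I(x)| \le \varepsilon$, it is not immediate that $x \mapsto \mu(B(x,r) \setminus (D_\varepsilon(x) \cap E_k^1(\varepsilon)))$ is Borel.

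For measurability, I would first extend $f_I$ by $0$ outside $D$ (which remains Borel) and write
\[ \mu(B(x,r) \setminus (D_\varepsilon(x) \cap E_k^1(\varepsilon))) = \mu(B(x,r)) - \int_F \mathbf{1}_{\{d(x,y) < r\}}\, \mathbf{1}_{E_k^1(\varepsilon) \cap D}(y)\, \mathbf{1}_{\{|f_I(y) - f_I(x)| \le \varepsilon\}}\, d\mu(y). \]
The first term is continuous in $x$ by Remark~\ref{measurecontinuity}. The integrand is jointly Borel on $F \times F$: openness of the metric ball gives the first factor; Lemmas~\ref{firstmeas} and \ref{E1} give the second; Borel measurability of $f_I$ gives the third. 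Tonelli's theorem then yields Borel measurability in $x$ at fixed $r$. Combining with continuity in $r$ at fixed $x$ (via Remark~\ref{measurecontinuity} together with monotone convergence for $B(x,r_n)$), the condition in \eqref{E2ineq} for all $r \in (0, 1/k)$ is equivalent to the same condition over rational $r \in (0, 1/k)$, so $E_k^2(\varepsilon)$ is a countable intersection of Borel sets.

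For the measure-zero statement, I would identify a full-measure set $N \subset D'$ on which the required $k$ can be found. Let $N$ consist of $x \in D'$ satisfying: (i) $x \in \bigcup_m E_m^1(\varepsilon)$; (ii) $x$ is a Lebesgue density point of $E_m^1(\varepsilon)$ for every $m$ with $x \in E_m^1(\varepsilon)$; (iii) $f_I$ is approximately continuous at $x$. Condition (i) uses Lemma~\ref{E1}; condition (ii) uses the Lebesgue density theorem (valid since $\mu$ is doubling) applied to each $E_m^1(\varepsilon)$ with a countable union of the resulting null sets; condition (iii) follows because Borel functions are approximately continuous $\mu$-a.e.\ with respect to doubling measures, and $f_I$ is Borel by Lemma~\ref{firstmeas}. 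Given $x \in N$, let $\eta := C_Q^{-1}\varepsilon^Q \min(w,1-w)/2^{Q+2}$, pick $k_0$ minimal with $x \in E_{k_0}^1(\varepsilon)$, and use (ii), (iii) to choose $r_0 > 0$ so that both
\[ \mu(B(x,r) \setminus E_{k_0}^1(\varepsilon)) \le \tfrac{\eta}{2}\mu(B(x,r)) \quad \text{and} \quad \mu(\{y \in B(x,r) \cap D : |f_I(y) - f_I(x)| > \varepsilon\}) \le \tfrac{\eta}{2}\mu(B(x,r)) \]
hold for all $0 < r < r_0$. Any $k \ge k_0$ with $1/k < r_0$ then works: since $E_k^1(\varepsilon)$ is increasing in $k$ (the defining constraint weakens) and $\mu(F \setminus D) = 0$, summing the two bounds yields \eqref{E2ineq} for every $0 < r < 1/k$, placing $x \in E_k^2(\varepsilon)$.
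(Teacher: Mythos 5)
Your proof is correct, and the second half (showing $\mu(F\setminus\bigcup_k E_k^2(\varepsilon))=0$) is essentially the paper's argument: apply Lemma~\ref{E1}, the Lebesgue density theorem, and approximate $\mu$-a.e.\ continuity of $f_I$ (both available because $\mu$ is doubling), then pick $K\geq k$ with $1/K<R$ and use the nesting $E_k^1(\varepsilon)\subset E_K^1(\varepsilon)$. The Borel measurability of $E_k^2(\varepsilon)$, however, is reached by a genuinely different route. After reducing to rational $r$ (you and the paper both do this via Remark~\ref{measurecontinuity}), the issue is that $D_\varepsilon(x)$ depends on $x$. The paper handles this with an explicit decomposition: for each $\alpha>0$ it writes the superlevel set in $x$ as
\[
\bigcup_{\substack{\eta>\varepsilon\\ \eta\in\mathbb{Q}}}\bigcap_{n\geq 1}\bigcup_{q\in\mathbb{Q}}\Bigl(\{x\in D':|f_I(x)-q|<1/n\}\cap\{x\in D':\mu\{y\in B(x,r):|f_I(y)-q|>\eta\ \text{or}\ y\notin E_k^1(\varepsilon)\}>\alpha\}\Bigr),
\]
then invokes Lemma~\ref{firstmeas} for the first piece and Remark~\ref{measurecontinuity} (which makes $x\mapsto\mu(B(x,r)\cap A)$ continuous for fixed Borel $A$) to conclude the second piece is open in $D'$. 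You instead observe that the integrand $\mathbf{1}_{\{d(x,y)<r\}}\,\mathbf{1}_{E_k^1(\varepsilon)}(y)\,\mathbf{1}_{\{|f_I(y)-f_I(x)|\leq\varepsilon\}}$ is jointly Borel on $F\times F$ and invoke Tonelli to get Borel measurability of the partial integral directly. Both are valid (your Tonelli step tacitly uses that $\mathcal{B}(F\times F)=\mathcal{B}(F)\otimes\mathcal{B}(F)$, which holds since $F$ is separable); yours is the cleaner abstract route and avoids the rational $q,\eta$ bookkeeping, while the paper's argument is more elementary and self-contained, relying only on pointwise topological facts rather than product measurability.
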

	
	\begin{proof} 
		Fix $\varepsilon>0$. We first show that $E_{k}^{2}(\varepsilon)$ is Borel with respect to $d$. Note that \eqref{E2ineq} holds for all $0<r<1/k$ if and only if it holds for all rational $0<r<1/k$. This follows by choosing a rational sequence $0<r_{n}<1/k$ with $r_{n}\downarrow r$, applying \eqref{E2ineq} for each $n$, and using Remark \ref{measurecontinuity}. Hence it suffices to show that both sides of the estimate defining $E_{k}^{2}(\varepsilon)$ are Borel measurable functions of $x$. For the right side we simply note that $x\mapsto \mu(B(x,r))$ is continuous for each $r$ by Remark \ref{measurecontinuity}. For the left side, consider $D' \to \bbR$ given by $x\mapsto \mu(B(x,r)\setminus (D_{\varepsilon}(x)\cap E_{k}^{1}(\varepsilon)))$. Notice that for every $\alpha>0$, the set
		\begin{align*}
			\left\{x\in D': \mu\{y\in B(x,r): y\notin E_{k}^{1}(\varepsilon) \mbox{ or }|f_{I}(y)-f_{I}(x)|>\varepsilon\}>\alpha\right\}
		\end{align*}
		can be written as 
		\begin{align*}
			& \bigcup_{\substack{\eta>\varepsilon\\ \eta \in \mathbb{Q}}} \bigcap_{n=1}^{\infty}\bigcup_{q\in \mathbb{Q}} \Big(\{x\in D': |f_{I}(x)-q|<1/n\} \\
			& \qquad \qquad \qquad \cap \{x\in D': \mu\{y\in B(x,r): |f_{I}(y)-q|>\eta \mbox{ or }y\notin E_{k}^{1}(\varepsilon)\}>\alpha\} \Big).
		\end{align*}
		 The first set inside the decomposition above is Borel by Lemma \ref{firstmeas}. The second is an open subset of $D'$, using Remark \ref{measurecontinuity}, hence Borel. Hence $E_{k}^{2}(\varepsilon)$ is Borel.
		
		Using Lemma \ref{E1}, almost every point of $F$ is a density point of $E_{k}^{1}(\varepsilon)$ with respect to $\mu$ for some $k\geq 1$. Also, since $f_{I}$ is Borel, almost every point of $F$ is a point of approximate continuity of $f_{I}$ with respect to $\mu$. Hence for almost every $x$ there exists $k\in \bbN$ and $R>0$ such that
		\[\mu\Big( B(x,r)\setminus ( D_{\varepsilon}(x) \cap E_{k}^{1}(\varepsilon) ) \Big) < \frac{C_{Q}^{-1}\varepsilon^{Q}\min(w,1-w)}{2^{Q+2}} \mu(B(x,r))\]
		for every $0<r<R$. Choose $K\in \bbN$ such that $K\geq k$ and $1/K<R$. Then using the fact $E_{k}^{1}(\varepsilon)\subset E_{K}^{1}(\varepsilon)$ it follows
		\[\mu\Big( B(x,r)\setminus ( D_{\varepsilon}(x) \cap E_{K}^{1}(\varepsilon) ) \Big) <\frac{C_{Q}^{-1}\varepsilon^{Q}\min(w,1-w)}{2^{Q+2}} \mu(B(x,r))\]
		for every $0<r<1/K$. Hence $x\in E_{K}^{2}(\varepsilon)$. This shows $\mu(F\setminus \bigcup_{k=1}^{\infty} E_{k}^{2}(\varepsilon)) =0$.
	\end{proof}
	
	\subsection{Choice of Suitable Line Segments}
	
	Fix $0<\varepsilon<1$. Let $x\in \bigcup_{k=1}^{\infty} E_{k}^{2}(\varepsilon)$ and fix $K\geq 1$ such that $x\in E_{K}^{2}(\varepsilon)$. Let $y\in F$ with $d(y,x)<1/(2K)$. Let $N\geq 1$ be the unique integer such that $1/3^{N}\leq d(x,y)< 1/3^{N-1}$.
	
	Assume that infinitely many wormhole levels are needed to connect $x$ to $y$ by a geodesic. It will be clear how the following argument can be simplified if only finitely many wormhole levels or even no wormhole levels are needed. Denote $T=d(x,y)$ and choose $\gamma\colon [0,T]\to F$ such that
	\begin{itemize}
		\item $\gamma$ is a geodesic from $x$ to $y$ with $\gamma(0)=x$ and $\gamma(T)=y$.
		\item $\gamma$ is a concatenation of countably many lines in the $I$ direction parameterized at unit speed.
	\end{itemize}
	By Lemma \ref{atmostone}, any geodesic joining $x$ to $y$ must pass through at most one wormhole of level less than or equal to $N-1$. We enumerate the wormhole levels needed to connect $x$ to $y$ by a strictly increasing sequence $N_{i}$ for integer $i\geq 0$, where possibly $N_{0}\leq N-1$, but necessarily $N_{i}\geq N$ for $i\geq 1$. Since $N_{1}\geq N$ and $N_{i}$ are strictly increasing, it follows that $N_{i}\geq N+i-1$ for $i\geq 1$.
	
	For each $i\geq 0$, let $\lambda_{i}$ be the point in the interval $[0,T]$ where $\gamma$ jumps using the wormhole of order $N_{i}$. Geodesics in $F$ can be chosen so that they change their direction (up or down) in the $I$ component at most twice (Proposition \ref{prop_geodesic}). Hence, during any subinterval of $[0,T]$ of length $t$, the geodesic spends at least a time $t/3$ following the same direction (either up or down but not changing between them) in the $I$ component. Since in any direction wormhole levels of order $N_{i}$ are spaced apart by at most a distance $2/3^{N_{i}}$, we can additionally choose $\gamma$ so that it satisfies:
	\begin{itemize}
		\item $\lambda_{0}\leq d(x,y)$, and
		\item $\lambda_{i}\leq 2/3^{N_{i}-1}$ for $i\geq 1$.
	\end{itemize}
	Using $N_{i}\geq N+i-1$ for $i\geq 1$ and the definition of $N$, we estimate as follows:
	\begin{align*}\sum_{i=0}^{\infty} \lambda_{i} &\leq d(x,y)+\sum_{i=1}^{\infty} \frac{2}{3^{N_{i}-1}}\\
		&\leq d(x,y)+\sum_{i=1}^{\infty}\frac{2}{3^{N+i-2}}\\
		&=d(x,y)+\frac{1}{3^{N-2}}\\
		&\leq 10d(x,y).	
	\end{align*}
	Let $(\mu_{i})_{i=0}^{\infty}$ be a strictly decreasing rearrangement of $\{\lambda_{i}:i\geq 0\}\cup \{T\}$. Thus $\mu_{0}=T$, $\mu_{i}\to 0$ as $i\to \infty$, $\gamma|_{[\mu_{i+1},\mu_{i}]}$ is a line segment for each $i\geq 0$, and
	\begin{equation}\label{summu}
		\sum_{i=0}^{\infty} \mu_{i}=\sum_{i=0}^{\infty} \lambda_{i} +T \leq 11d(x,y).
	\end{equation}
	Denote $p_{i}=\gamma(\mu_{i})$ for $i\geq 0$. Notice $p_{0}=y$ and $p_{i}\to x$ as $i\to \infty$. It follows that 
	\begin{equation}\label{telescope}
		f(y)-f(x)=\sum_{i=0}^{\infty} (f(p_{i})-f(p_{i+1})).
	\end{equation}
	Since $\gamma|_{[\mu_{i+1},\mu_{i}]}$ is a line segment in the $I$ direction, it follows $p_{i}$ is reached from $p_{i+1}$ by travelling a displacement $h(p_{i})-h(p_{i+1})$ in the $I$ direction.
	
	\subsection{Estimate Along Line Segments}
	
	Our aim is to show that $f(p_{i})-f(p_{i+1})$ is well approximated by $f_{I}(x)(h(p_{i}) - h(p_{i+1}))$ for every $i\geq 0$. Fix $i\geq 0$ until otherwise stated.
	
	\begin{lemma}\label{qs}
		There exist points $q_{i}, q_{i+1}\in F$ with the following properties:
		\begin{enumerate}
			\item $d(q_{i+1},p_{i+1})\leq \varepsilon \mu_{i+1}$,
			\item $d(q_{i},p_{i})\leq 6\varepsilon \mu_{i+1}$,
			\item $q_{i+1}\in E_{K}^{1}(\varepsilon)\cap D_{\varepsilon}(x)$,
			\item $q_{i}$ is reached from $q_{i+1}$ by travelling a vertical displacement $h(p_{i})-h(p_{i+1})$ in the $I$ direction.
		\end{enumerate}
	\end{lemma}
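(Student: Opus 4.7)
The plan is a measure-theoretic argument: I will show that the set of candidates $q_{i+1}$ in a small ball around $p_{i+1}$ satisfying (1) and (3), and whose vertical shift yields a $q_i$ satisfying (2) and (4), has strictly positive $\mu$-measure, hence is non-empty. Set $\delta := \varepsilon \mu_{i+1}$. Since $p_{i+1}$ lies on a geodesic emanating from $x$ with $d(x, p_{i+1}) \leq \mu_{i+1}$, we have $B(p_{i+1}, \delta) \subset B(x, 2\mu_{i+1})$, and since $d(x,y) < 1/(2K)$ and $\mu_{i+1} \leq \mu_0 = d(x,y)$, we also have $2\mu_{i+1} < 1/K$.

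First I would use the iterated doubling estimate recalled earlier to lower bound
\[ \mu(B(p_{i+1}, \delta)) \geq C_Q^{-1}(\varepsilon/2)^Q \, \mu(B(x, 2\mu_{i+1})). \]
Combining this with the defining inequality of $E_K^2(\varepsilon)$ applied at radius $2\mu_{i+1}$,
\[ \mu\bigl(B(x, 2\mu_{i+1}) \setminus (D_\varepsilon(x) \cap E_K^1(\varepsilon))\bigr) \leq \tfrac{C_Q^{-1}\varepsilon^Q \min(w,1-w)}{2^{Q+2}} \mu(B(x, 2\mu_{i+1})), \]
yields that the good set $G := B(p_{i+1}, \delta) \cap D_\varepsilon(x) \cap E_K^1(\varepsilon)$ occupies at least a $(1 - \min(w,1-w)/4)$ fraction of $\mu(B(p_{i+1}, \delta))$. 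This ensures plenty of candidates satisfying properties (1) and (3).

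Next I would address properties (2) and (4). Because $\gamma|_{[\mu_{i+1}, \mu_i]}$ contains no wormhole crossings by construction, it lifts to a pure vertical segment in $I \times K$ from a representative $(h(p_{i+1}), c)$ of $p_{i+1}$ to $(h(p_i), c)$ of $p_i$ for some fixed $c \in K$. For each candidate $q_{i+1} \in G$, I would select a lift $(h,c') \in I \times K$ close to $(h(p_{i+1}), c)$ and define $q_i := q(h + \Delta h, c')$, where $\Delta h := h(p_i) - h(p_{i+1})$; this guarantees (4) by construction. The task is then to arrange $d(q_i, p_i) \leq 6\delta$. By analyzing which representatives in $I \times K$ actually correspond to points of $B(p_{i+1}, \delta)$ and tracking how the vertical shift in $I \times K$ transports a neighborhood of $(h(p_{i+1}), c)$ to a neighborhood of $(h(p_i), c)$, I would show that all but a "vertical failure" set $V \subset B(p_{i+1}, \delta)$ of $\mu$-measure at most $\tfrac{\min(w,1-w)}{4} \mu(B(p_{i+1}, \delta))$ admits such a lift, with the constant $6$ arising from worst-case rerouting of geodesics through wormholes near $p_i$.

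Combining the two estimates, $G \setminus V$ has strictly positive $\mu$-measure, so any element of this set furnishes a $q_{i+1}$ which, paired with the associated $q_i$, satisfies all four properties. The main obstacle is the bound on $V$: one must understand how the various representatives in $I \times K$ of the ball $B(p_{i+1}, \delta)$ distribute across Cantor coordinates, and how the vertical shift interacts with wormholes intersecting the interval $[h(p_{i+1}), h(p_i)]$ at coordinates near $c$. This is precisely where the factor $\min(w, 1-w)$ enters, since $\nu$ weights sibling sub-copies of $K$ by $w$ and $1-w$, and the sub-copies containing the "correct" Cantor coordinate retain at least a $\min(w,1-w)$ fraction of the relevant measure.
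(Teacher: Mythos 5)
Your overall strategy---find $q_{i+1}$ by a measure-theoretic excision, using the $E_K^2$ inequality for properties (1) and (3) and a separate estimate for properties (2) and (4), then define $q_i$ by the vertical shift---is the same as the paper's. Your treatment of (1) and (3) is correct: using $B(p_{i+1},\varepsilon\mu_{i+1})\subset B(x,2\mu_{i+1})$, the iterated doubling estimate, and the $E_K^2$ inequality gives that the set of candidates failing (1) or (3) occupies less than a $\min(w,1-w)/4$ fraction of $\mu(B(p_{i+1},\varepsilon\mu_{i+1}))$, exactly as in the paper's inequality \eqref{J5a}.

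The gap is in the claimed bound on the ``vertical failure set.'' You want to show $\mu(V)\leq \tfrac{\min(w,1-w)}{4}\mu(B(p_{i+1},\varepsilon\mu_{i+1}))$, i.e.\ that the set of candidates which admit a good lift occupies at least a $(1-\tfrac{\min(w,1-w)}{4})$ fraction of the ball. This is not achievable. The paper instead considers the explicit set $S$ of points $z\in B(p_{i+1},\varepsilon\mu_{i+1})$ that can be connected to $p_{i+1}$ using only wormholes of order $n\geq B$, where $1/3^B\leq \varepsilon\mu_{i+1}<1/3^{B-1}$. By Lemma \ref{atmostone}, the ball meets at most three level-$(B-1)$ cylinders $I\times K_{a_j}$; since $S$ coincides with the piece in the cylinder containing $p_{i+1}$, Fubini and the fact that sibling Cantor cylinders have $\nu$-mass ratio bounded by $\min(w,1-w)^{-1}$ give only
\[
\mu(S\cap B(p_{i+1},\varepsilon\mu_{i+1}))\geq \tfrac{\min(w,1-w)}{3}\mu(B(p_{i+1},\varepsilon\mu_{i+1})),
\]
which can be a small fraction of the ball when $w$ is close to $0$ or $1$. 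Thus the complement of $S$ in the ball can have measure close to $(1-\tfrac{\min(w,1-w)}{3})\mu(B)$, far exceeding your target $\tfrac{\min(w,1-w)}{4}\mu(B)$. The argument still closes, but the bookkeeping must be the other way around: the ``good lift'' set $S$ has measure at least $\tfrac{\min(w,1-w)}{4}\mu(B)$, which strictly exceeds the bad-set bound from (1),(3), so the intersection is nonempty. Once $q_{i+1}\in S$ is chosen and $q_i$ defined by the vertical shift, the estimate $d(q_i,p_i)\leq 2\varepsilon\mu_{i+1}+4/3^B\leq 6\varepsilon\mu_{i+1}$ follows because $q_i$ and $p_i$ have the same high-level Cantor coordinates and wormholes of level $\geq B$ are spaced at most $2/3^B$ apart. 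Your heuristic about where the $\min(w,1-w)$ factor originates is correct; what needs fixing is the direction and size of the measure estimate.
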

	
	\begin{proof}
		Using $0<\varepsilon<1$ and $\mu_{i+1}\leq T<1/2K$ gives $\mu_{i+1}+\varepsilon\mu_{i+1}<1/K$. Hence the fact that $x\in E_{K}^{2}(\varepsilon)$ gives,
		\[\mu \Big( B(x,\mu_{i+1}+\varepsilon\mu_{i+1})\setminus ( D_{\varepsilon}(x) \cap E_{K}^{1}(\varepsilon)) \Big) <\frac{C_{Q}^{-1}\varepsilon^{Q}\min(w,1-w)}{2^{Q+2}}\mu(B(x,\mu_{i+1}+\varepsilon\mu_{i+1})).\]
		Since $\gamma$ is a geodesic, $d(x,p_{i+1})=d(\gamma(0),\gamma(\mu_{i+1}))=\mu_{i+1}$. Hence
		\[B(p_{i+1},\varepsilon \mu_{i+1}) \subset B(x,\mu_{i+1}+\varepsilon\mu_{i+1}).\]
  It follows that
		\[\mu \Big( B(p_{i+1},\varepsilon \mu_{i+1}) \setminus ( D_{\varepsilon}(x) \cap E_{K}^{1}(\varepsilon)) \Big) <\frac{C_{Q}^{-1}\varepsilon^{Q}\min(w,1-w)}{2^{Q+2}}\mu(B(x,\mu_{i+1}+\varepsilon\mu_{i+1})).\]
  However, using the doubling property and $0<\varepsilon<1$,
  \[\frac{ \mu(B(p_{i+1},\varepsilon \mu_{i+1})) }{\mu(B(x,\mu_{i+1}+\varepsilon\mu_{i+1}))}\geq C_{Q}^{-1}\left( \frac{\varepsilon}{1+\varepsilon} \right)^{Q}> C_{Q}^{-1}\frac{\varepsilon^{Q}}{2^{Q}}.\]
Combining the previous two steps gives 
\begin{equation}\label{J5a}
 \mu \Big( B(p_{i+1},\varepsilon \mu_{i+1}) \setminus ( D_{\varepsilon}(x) \cap E_{K}^{1}(\varepsilon)) \Big) < \frac{\min(w,1-w)}{4}\mu(B(p_{i+1},\varepsilon \mu_{i+1})).
 \end{equation}

Fix an integer $B\geq 1$ such that $1/3^B\leq \varepsilon \mu_{i+1} < 1/3^{B-1}$. This implies that within a vertical distance $\varepsilon \mu_{i+1}$ of $p_{i+1}$, there is at most two wormhole levels of order less than or equal to $B-1$. Let $S$ be the set of points $z\in B(p_{i+1},\varepsilon \mu_{i+1})$ such that $z$ can be connected to $p_{i+1}$ using only wormhole levels of order $n\geq B$. Then we obtain
\begin{equation}\label{J5b}    
\mu(S\cap B(p_{i+1},\varepsilon \mu_{i+1}))\geq \frac{\min(w,1-w)}{4}\mu(B(p_{i+1},\varepsilon \mu_{i+1})).
\end{equation}
Indeed, the ball $B(p_{i+1},\varepsilon \mu_{i+1})$ intersects at most three sets $K_{a_j} \times I$, where $a_j$ are finite strings of length $B-1$, $j=1,2,3$. Without loss of generality, assume $p_{i+1} \in  I\times K_{a_1}$ and note that each $a_j$ differs from $a_1$ in at most one entry. Define the sets $Q_j =  B(p_{i+1},\varepsilon \mu_{i+1})\cap (I\times K_{a_j})$. Note $S\cap B(p_{i+1},\varepsilon \mu_{i+1})= Q_{1}$. For each $j$ we have, as a consequence of Fubini's theorem,
\[ \mu(Q_{j})\leq \frac{\mu(Q_{1})}{\min(w,1-w)}.\]
Hence
\[B(p_{i+1},\varepsilon \mu_{i+1})=\sum_{j=1}^{3}\mu(Q_{j})\leq \frac{3\mu(Q_{1})}{\min(w,1-w)}.\]
Hence
\[\mu(Q_{1})\geq \frac{\min(w,1-w)}{3}\mu(B(p_{i+1},\varepsilon \mu_{i+1}))\]
which gives the desired inequality.

Using \eqref{J5a} and \eqref{J5b} we can choose a point $q_{i+1}$ with
		\[q_{i+1} \in S\cap B(p_{i+1},\varepsilon \mu_{i+1})\cap D_{\varepsilon}(x) \cap E_{K}^{1}(\varepsilon).\]
		Clearly by definition $q_{i+1}$ satisfies (1) and (3). 
		
		Next, define $q_{i}$ from $q_{i+1}$ as stated in (4). Then $q_{i}$ can be reached from $p_{i}$ from a vertical displacement at most $2 \varepsilon \mu_{i+1}$ and wormhole levels of order $n\geq B$. Such jump levels are spaced by at most $2/3^{B}$ in the vertical direction. Hence
		\[d(q_{i},p_{i})\leq 2 \varepsilon \mu_{i+1}+(4/3^B)\leq 6 \varepsilon \mu_{i+1}.\]
		This shows that $q_{i}$ satisfies (2) and completes the proof.
	\end{proof}

 Using Lemma \ref{qs} and the same steps as in \cite{CPS22} yields the estimate 	
 \begin{align*}
		&|f(p_{i})-f(p_{i+1})-f_{I}(x)(h(p_{i})-h(p_{i+1}))| \label{lineest}\\
		&\qquad \leq (2L+2)\varepsilon |h(p_{i})-h(p_{i+1})|+7L\varepsilon\mu_{i+1} \nonumber.
	\end{align*}
 Adding these estimates over all $i\geq 0$ gives
 	\begin{align*}
		&|f(y)-f(x)-f_{I}(x)(h(y)-h(x))|\\
		&\qquad \leq (2L+2)\varepsilon d(x,y) + 77L\varepsilon d(x,y).
	\end{align*}
Using a similar argument to that of \cite{CPS22} then concludes the proof of Theorem \ref{rademachermuw}.

\section{Poincar\'e inequality}\label{sec:pi}
In this section, we give an argument that $(F,d,\mu)$ satisfies also a \emph{$(1,1)$-Poincar\'e inequality}: This means there exist constants $C>0, \lambda \geq 1$ so that
\[
\frac{1}{\mu(B(x,r))}\int_{B(x,r)} |f-f_B| d\mu \leq C r \frac{1}{\mu(B(x,\lambda r))} \int_{B(x,\lambda r)} \Lip[f] d\mu,
\]
holds for all Lipschitz functions $f:F\to \R$ and all balls $B=B(x,r)\subset F$. Here, $\Lip[f](x)=\limsup_{y\to x} \frac{|f(y)-f(x)|}{d(x,y)}$, and $f_A=\frac{1}{\mu(A)} \int_A f d\mu$ for Borel sets $A\subset X$ with $\mu(A)>0$. See \cite{Che99} and \cite{shabook} for further background on the Poincar\'e inequality, and \cite[Theorem 2]{keith} for relationships between equivalent formulations of the Poincar\'e inequality. In \cite{Che99}, it was shown that a doubling metric measure space satisfying a Poincar\'e inequality satisfies a notion of differentiability with respect to a collection of charts. Theorem \ref{Rademacher} shows that the chart constructed in \cite{Che99} can be chosen as $(F,h)$. Thus, by proving the Poincar\'e inequality, we establish a closer connection between this work and \cite{Che99}. Further, we show that the present examples are similar to the ones by Schioppa in \cite{Sch15}.

The argument is based on using a pointwise version of the Poincar\'e inequality: There exist constants $C,\lambda \geq 1$ so that for all Lipschitz functions $f:F\to \R$ and all points $p,q\in X$ we have:
\[
|f(p)-f(q)|\leq Cd(x,y)( M_{\lambda d(x,y)} \Lip[f](p) + M_{\lambda d(x,y)} \Lip[f](q)),
\]
where 
\[
M_R h (x) = \sup_{r\in (0,R)} \frac{1}{\mu(B(x,r))} \int_{B(x,r)} |h| d\mu
\]
is the Hardy-Littlewood maximal function. By a result from \cite[Theorem 8.1.7]{shabook} a $(1,1)$-Poincar\'e inequality is equivalent to a pointwise $(1,1)$-Poincar\'e inequality. This result is originally due to Haj{\l}asz and Koskela, see e.g. \cite{hajlaszkoskela}.  We will prove the pointwise version of the Poincar\'e inequality by a ``chaining of balls''-type argument, although in our case, we will chain rectangles of the form $J\times K_x$. This type of argument is also due to Haj{\l}asz and Koskela.

Our argument will use the following simple one-dimensional result. Let $J\subset \R$ be an interval, 
$f:J\to \R$ a Lipschitz function, and let $A,B\subset J$ be positive measure subsets. Then we have the following inequality:
\begin{equation}\label{eq:onedpoincare}
\left|\frac{1}{|A|} \int_A f(t) dt - \frac{1}{|B|} \int_B f(t) dt \right|\leq \int_J \Lip[f](t) dt,
\end{equation}
where $|A|$ is the Lebesgue measure of the set $A$.
Indeed, for every $a\in A, b\in B$, we have $|f(a)-f(b)|\leq \int_J \Lip[f](t) dt$. Taking an average integral in both $a\in A$ and $b\in B$ yields \eqref{eq:onedpoincare}. 

To simplify the presentation of the proof below, we will use $\mathcal{A}\lesssim \mathcal{B}$ to indicate that there is a constant $C$ so that $\mathcal{A}\leq C\mathcal{B}$. The constant $C$ in all instances will only depend on the space $F$ and not on the function $f$, or other variables in the proof.

\begin{proof} In the proof, we denote by $\mu$ the measure $\mu_w$ and by $\nu$ the measure $\nu_w$. First, we prove three inequalities, \eqref{eq:simplePI1}, \eqref{eq:simplePI2}, \eqref{eq:simplePI3}, where one can control the differences of averages over sets of the form $q(J\times K_x)$, called rectangles, which lie ``near'' to each other in specific ways. The three cases are showed in Figure \ref{fig:cases}.

First, Case A), where two rectangles are connected through a wormhole: Let $q(J\times K_x)$ and $q(J\times K_{x'})$ be sets with $|x|=|x'|$, $J\subset [0,1]$ is a sub-interval and for which the strings $x$ and $x'$ differ only at the $n'$th bit for some $1\leq n\leq |x|$, and $t_n \in J$ for some  wormhole level $t_n\times K$ of order $n$. Let $s$ be a point in $K_x$, which via its trinary expansion can be identified by an infinite trinary string, and let $s'\in K_{x'}$ be the infinite trinary string obtained from $s$ with the $n'$th bit  flipped. Equivalently $s'$ is obtained from $s$ via the identification in Definition \ref{def:wormhole}, i.e. a translation to the right or left by $2(3^{-n})$, depending on if the n'th bit of $s$ is a 0 or 2, respectively. Then, for $t\in [0,3|J|]$ we define
\[
\gamma_{s}(t)=\begin{cases} [(\max(J)-t,s)] & t\in [0,|J|] \\
[(\min(J)+(t-|J|),s)] & t\in [|J|,|J|+t_n-\min(J)] \\ 
[(\min(J)+(t-|J|),s')] & t\in [|J|+t_n-\min(J),2|J|] \\ 
[(\max(J)-(t-2|J|),s')] & t\in [2|J|,3|J|] \\
\end{cases}
\]

Notice $\gamma_s(|J|)=[(\min(J),s)]$ from both the first and second line, $\gamma_s(|J|+t_n-\min(J))=[(t_n,s)]=[(t_n,s')]$ from the second and third line and $\gamma_s(2|J|)=[(\max(J),s')]$ from the final two lines. See Figure \ref{fig:gamomega} for a figure of the curve $\gamma_s(t)$.

\begin{figure}[!ht]
\begin{tikzpicture}[scale=0.3]
\draw (5.75,19.75) -- (5.75,9.5);
\draw  (10,9.5) -- (10,19.75);
\draw [dashed] (5.75,17.25) -- (10,17.25);
\draw [->,thick] (5.25,19.75) -- (5.25,9.5);
\draw [->,thick] (6.25,9.5) -- (6.25,17.25);
\draw [->,thick] (9.5,17.25) -- (9.5,19.75);
\draw [->,thick] (10.5,19.75) -- (10.5,9.5);
\node  at (5.75,8.75) {$J\times s$};
\node  at (10,8.75) {$J\times s'$};
\end{tikzpicture}
\caption{The path $\gamma_s$. The dashed line shows where the wormhole is used.}
\label{fig:gamomega}
\end{figure}
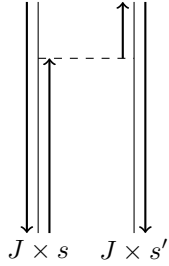

The curve $\gamma_s$ is a unit speed curve, and $f\circ\gamma_{s}(t)$ is Lipschitz. First, by using the fact that $\mu = q_*(\calH^1 \times \nu)$, we get by a change of variables and the fact $\mu(q(J\times K_x))=|J|\mu(K_x)$ that
\begin{align*}
\frac{1}{\mu(q(J\times K_x))}\int_{q(J\times K_x)} f d\mu&= \frac{1}{|J|\nu(K_x)}\int_{J\times K_x} f(q(t,s)) dt d\nu(s) \\
&= \frac{1}{|J| \nu(K_x)}\int_{K_x} \int_0^{|J|} f(\gamma_{s}(t))dt d\nu(s).
\end{align*}
Next, we use fact that the map $T(s)=s', T:K_x\to K_{x'}$ is a translation, and the push-forward is given by $T_*(\nu|_{K_x})=\frac{\nu(K_x)}{\nu(K_{x'})} \nu|_{K_{x'}}$. This follows from the definition of $\nu$ and Proposition \ref{prop:measure}, since if $a$ is any finite string then $T_*\nu(K_{x'a})=\nu(K_{xa})=\nu(K_x)\nu(K_a)$ and $\nu(K_{x'a})=\nu(K_{x'})\nu(K_a)$. Since the sets $K_{xa}$ generate the sigma-algebra, this shows that the measures $T_*(\nu|_{K_x})$ and $\nu|_{K_{x'}}$ differ only by the factor $\nu(K_{x})/\nu(K_{x'})$. This, together with the same change of variables, yields 
\begin{align*}
\frac{1}{\mu(q(J\times K_{x'}))}\int_{q(J\times K_{x'})} f d\mu&= \frac{1}{|J|\nu(K_{x'})}\int_{J\times K_{x'}} f(q(t,s')) dtd\nu(s') \\
&=  \frac{1}{|J| \nu(K_{x})}\int_{K_x} \int_{2|J|}^{3|J|} f(\gamma_{s}(t)) dt d\nu(s).
\end{align*}

Thus, \eqref{eq:onedpoincare} and integration over $s$ gives
\begin{align}
\label{eq:simplePI1}
&\left|\frac{1}{\mu(q(J\times K_x))}\int_{q(J\times K_x)} fd\mu - \frac{1}{\mu(q(J\times K_{x'}))}\int_{q(J\times K_{x'})} f d\mu\right|  \\
& \leq\frac{1}{|J| \nu(K_{x})}\int_{K_x} \left|\int_{0}^{|J|} f(\gamma_{s}(t)) dt  - \int_{2|J|}^{3|J|} f(\gamma_{s}(t)) dt\right| d\nu(s) \nonumber \\
& \leq  2|J| \frac{1}{\mu(q(J\times K_{x'}))}\int_{q(J\times K_{x'})} \Lip[f](t) d\mu + 2|J|\frac{1}{\mu(q(J\times K_{x}))}\int_{q(J\times K_{x})} \Lip[f](t) d\mu.\nonumber
\end{align}

Next, we consider Case $B)$, where the two rectangles are of the form $q(J\times K_x)$ and $q(J'\times K_x)$ with $J'\subset J$ and $J',J$ are subintervals of $[0,1]$. Then \eqref{eq:onedpoincare} implies
\begin{align}
\left|\frac{1}{\mu(q(J\times K_{x}))}\int_{J\times K_{x}} f d\mu \right. &-\left.  \frac{1}{\mu(q(J'\times K_{x}))}\int_{q(J'\times K_{x})} f d\mu 
\right| \nonumber \\
&\leq |J| \frac{1}{\mu(q(J\times K_{x}))}\int_{q(J\times K_{x})} \Lip[f] d\mu. \label{eq:simplePI2}
\end{align}
Finally, consider the Case C) consisting of rectangles $q(J\times K_x)$ and $q(J'\times K_{x'})$, where $J'\subset J$,  $x'$ is obtained from the string $x$ by adding one bit and $J$ contains a wormhole at level $|x|+1$. By symmetry consider only the case, where $w'=w0$ and consider three averages $Q_1=\frac{1}{\mu(q(J'\times K_{x0}))}\int_{q(J'\times K_{x0})} fd\mu$, $Q_2=\frac{1}{\mu(q(J\times K_{x0}))}\int_{q(J\times K_{x0})} fd\mu$ and $Q_3=\frac{1}{\mu(q(J\times K_{x1}))}\int_{q(J\times K_{x1})} fd\mu$. Then
\begin{align}
\nonumber
\left|\frac{1}{\mu(q(J\times K_{x}))}\int_{q(J\times K_{x})} fd\mu \right. &-\left. \frac{1}{\mu(q(J'\times K_{x'}))}\int_{q(J'\times K_{x'})} fd\mu 
\right| \\
&= 
\left| (wQ_2+(1-w)Q_3)-Q_1\right|  \nonumber \\
&\leq (1-w) |Q_2-Q_3| + |Q_2-Q_1| \nonumber \\
&\lesssim |J| \frac{1}{\mu(q(J\times K_{x}))}\int_{q(J\times K_{x})} \Lip[f] d\mu. \label{eq:simplePI3}
\end{align}
In the last line, we used \eqref{eq:simplePI2} to estimate the difference $|Q_2-Q_1|$ and \eqref{eq:simplePI1} to estimate the difference $|Q_2-Q_3|$.

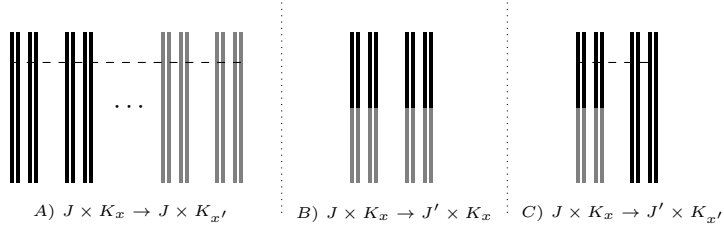
\begin{figure}[!ht]
\begin{tikzpicture}[scale=0.4]
\fill [fill=black, draw=none] (0,0) rectangle (.1,5);
\fill [fill=black, draw=none](.2,0) rectangle (.3,5);
\fill [fill=black, draw=none] (0.6,0) rectangle (.7,5);
\fill [fill=black, draw=none](.8,0) rectangle (.9,5);

\fill [fill=black, draw=none] (1.8,0) rectangle (1.9,5);
\fill [fill=black, draw=none](2.0,0) rectangle (2.1,5);
\fill [fill=black, draw=none] (2.4,0) rectangle (2.5,5);
\fill [fill=black, draw=none](2.6,0) rectangle (2.7,5);

\node at (4,2.5) {$\cdots$};
\node at (4,-1) { \tiny $A)\  J\times K_x \to J\times K_{x'}$};

\fill [fill=gray, draw=none] (5,0) rectangle (5.1,5);
\fill [fill=gray, draw=none](5.2,0) rectangle (5.3,5);
\fill [fill=gray, draw=none] (5.6,0) rectangle (5.7,5);
\fill [fill=gray, draw=none](5.8,0) rectangle (5.9,5);

\fill [fill=gray, draw=none] (6.8,0) rectangle (6.9,5);
\fill [fill=gray, draw=none](7.0,0) rectangle (7.1,5);
\fill [fill=gray, draw=none] (7.4,0) rectangle (7.5,5);
\fill [fill=gray, draw=none](7.6,0) rectangle (7.7,5);

\draw [dashed] (0,4) -- (7.7,4);

\draw [dotted] (9,-1) -- (9,6);

\fill [fill=gray, draw=none] (11.3,0) rectangle (11.4,2.5);
\fill [fill=gray, draw=none](11.5,0) rectangle (11.6,2.5);
\fill [fill=gray, draw=none] (11.9,0) rectangle (12.0,2.5);
\fill [fill=gray, draw=none](12.1,0) rectangle (12.2,2.5);

\fill [fill=black, draw=none] (11.3,2.5) rectangle (11.4,5);
\fill [fill=black, draw=none](11.5,2.5) rectangle (11.6,5);
\fill [fill=black, draw=none] (11.9,2.5) rectangle (12.0,5);
\fill [fill=black, draw=none](12.1,2.5) rectangle (12.2,5);

\node at (12.75,-1) {\tiny $B)\  J\times K_x \to J'\times K_{x}$};

\fill [fill=gray, draw=none] (13.1,0) rectangle (13.2,2.5);
\fill [fill=gray, draw=none](13.3,0) rectangle (13.4,2.5);
\fill [fill=gray, draw=none] (13.7,0) rectangle (13.8,2.5);
\fill [fill=gray, draw=none](13.9,0) rectangle (14,2.5);

\fill [fill=black, draw=none] (13.1,2.5) rectangle (13.2,5);
\fill [fill=black, draw=none](13.3,2.5) rectangle (13.4,5);
\fill [fill=black, draw=none] (13.7,2.5) rectangle (13.8,5);
\fill [fill=black, draw=none](13.9,2.5) rectangle (14,5);

\draw [dotted] (16.5,-1) -- (16.5,6);

\fill [fill=gray, draw=none] (18.8,0) rectangle (18.9,2.5);
\fill [fill=gray, draw=none](19.0,0) rectangle (19.1,2.5);
\fill [fill=gray, draw=none] (19.4,0) rectangle (19.5,2.5);
\fill [fill=gray, draw=none](19.6,0) rectangle (19.7,2.5);

\fill [fill=black, draw=none] (18.8,2.5) rectangle (18.9,5);
\fill [fill=black, draw=none](19.0,2.5) rectangle (19.1,5);
\fill [fill=black, draw=none] (19.4,2.5) rectangle (19.5,5);
\fill [fill=black, draw=none](19.6,2.5) rectangle (19.7,5);

\node at (20.35,-1) {\tiny $C)\  J\times K_x \to J'\times K_{x'}$};

\fill [fill=black, draw=none] (20.6,0) rectangle (20.7,5);
\fill [fill=black, draw=none](20.8,0) rectangle (20.9,5);
\fill [fill=black, draw=none] (21.2,0) rectangle (21.3,5);
\fill [fill=black, draw=none](21.4,0) rectangle (21.5,5);

\draw [dashed] (18.8,4) -- (21.5,4);

\end{tikzpicture}
\caption{The three cases for rectangles that we consider. One of the sets is shaded gray and the other set is shaded black. In the middle case, the gray shading overlaps with the black shading. The dashed lines show wormhole levels. Notice how Case C can be decomposed into parts, which are related via Case A and Case B.}
\label{fig:cases}
\end{figure}

With these simple estimates given, we construct a chain of sets of the form $q(J_i\times K_{x_i})$, $i\in \Z$, which connect every pair of points, and where consecutive sets are related to each other as in one of the three cases considered above.  Let $p=[x_1,x_2],q=[y_1,y_2]\in F$ be two distinct points. Choose $n\in \N$ so that $2(3^{-n-1})<d(p,q) \leq 2(3^{-n})$. Let $J_0$ be a shortest interval containing $x_1,y_1$ and which contains all the wormhole levels needed to connect $p$ to $q$ and which has length comparable to $d(p,q)$, with a constant independent of $p$ and $q$. Let $m\leq 0$ and define $K_{m}^1$ to be the $n+m-1$'th level interval in the Cantor set containing $x_2$, and for $m>0$ let $K_m^2$ to be the $n+m-2$'th level interval in the Cantor set containing $y_2$. First, let $J_1^1=J_0=J_1^2$. Further, let $
\{J_m^1\}_{m=2}^\infty$ be a nested sequence of intervals of lengths $3^{-n-m}$ which contains $x_1$ and where each interval is contained in $J_0$. Symmetrically, let $
\{J_m^2\}_{m=2}^\infty$ be a nested sequence of intervals of lengths $3^{-n-m}$ which contains $y_1$ and where each interval is contained in $J_0$.

Now, consider the sets $Q_i$ defined as follows. $Q_0=q(J_0\times K_0^1)$, and $Q_m = q(J_m^1\times K_{|m|}^1)$ for $m< 0$ and $Q_m = q(J^2_m\times K_{|m|}^2)$ for $m> 0$.  By construction we can write $Q_m= q(J_m\times K_{x_m})$ with $|J_m|\leq \diam(Q_m)\lesssim d(p,q) 3^{-|m|}$. Notice that for all $m\in \Z$, the sets $Q_m$ and $Q_{m+1}$ relate to each other as one of the three cases considered in the beginning of the proof. First, for each $i<0$, we have that $Q_i$ is obtained from $Q_{i+1}$ as in case C), since $J_i^1 \subset J_{i+1}^1$ and $K_{i-1}^1$ is the left or right half of $K_{i}^1$. Similarly for $i>0$, we have that $Q_i$ and $Q_{i+1}$ are related as in C). We are left to consider $Q_0$ and $Q_1$. We have $J_1^2=J_0$ and $K_0^1=K_a$ and $K_1^2=K_b$ for some finite strings $a,b$ with $|a|=|b|$. By Lemma \ref{atmostone}, the strings $a,b$ can differ from each other by at most one bit of index $\leq n-1$, and there is a wormhole level $t\in J_0$ corresponding to this level. Thus, we can apply Case A) to estimate the difference between $f_{Q_0}$ and $f_{Q_1}$.

Now, by combining continuity of $f$ with a telescoping sum argument as well as \eqref{eq:simplePI3}, \eqref{eq:simplePI2} and \eqref{eq:simplePI1}, we get
\begin{align}
|f(p)-f(q)|&\leq \sum_{i\in \Z}\left|\frac{1}{\mu(Q_i)}\int_{Q_i} f d\mu - \frac{1}{\mu(Q_{i+1})}\int_{Q_{i+1}} f d\mu\right| \nonumber \\
&\lesssim \sum_{i\in \Z}\diam(Q_i)\frac{1}{\mu(Q_i)}\int_{Q_i} \Lip[f] d\mu +  \diam(Q_{i+1})\frac{1}{\mu(Q_{i+1})}\int_{Q_{i+1}} \Lip[f] d\mu \nonumber\\
&\lesssim \sum_{i\in \Z}\diam(Q_i)\frac{1}{\mu(Q_i)}\int_{Q_i} \Lip[f] d\mu.
\label{eq:sumpi}
\end{align}
In the last line, we observed that the sum over $i$ of the two terms on the second line are equal, since they are obtained by shifting indices $i\to i+1$. Notice that for $i\leq 0$ we have $d(Q_i, p)\lesssim 3^{-|i|}d(p,q)$, and that $\mu(Q_i) \gtrsim \mu(B(p,3^{-|i|}d(p,q)))$ by doubling since $Q_i$ contains a ball with radius comparable to $3^{-|i|}d(p,q)$. Thus, we see that there is a constant $\lambda >1$ independent of $p$ and $q$ for which  
\[
\frac{1}{\mu(Q_i)}\int_{Q_i} \Lip[f] d\mu \lesssim M_{\lambda d(p,q)} \Lip[f](p) \text{ for $i\leq 0$.}
\]
Similarly, 
\[
\frac{1}{\mu(Q_i)}\int_{Q_i} \Lip[f] d\mu \lesssim M_{\lambda d(p,q)} \Lip[f](q) \text{ for $i>0$.}
\]
Thus, from this, \eqref{eq:sumpi} and $\diam(Q_i)\lesssim 3^{-|i|}d(p,q)$ we get some constant $C>1$ independent of the points $p,q$ and the function $f$ for which 
\[
|f(p)-f(q)|\leq Cd(p,q) (M_{\lambda d(p,q)}\Lip[f](p)+M_{\lambda d(p,q)}\Lip[f](q)).
\]
This is the pointwise Poincar\'e inequality, and by \cite[Theorem 8.1.7]{shabook} this implies the $(1,1)$-Poincar\'e inequality.
\end{proof}

\end{document}